\documentclass[11pt,reqno]{amsart}

\usepackage{amsthm,amsmath,amssymb}
\usepackage{graphicx}
\usepackage{float}
\usepackage[colorlinks=true,
linkcolor=blue,
urlcolor=blue,
citecolor=blue]{hyperref}
\usepackage{mathtools}
\usepackage{relsize}
\usepackage{ytableau}
\usepackage{tikz}
 \usepackage{enumerate}
\usepackage[toc,page]{appendix}
\usepackage{lscape}
\usepackage{multirow}
\usepackage{adjustbox,expl3,etoolbox} 
\usepackage[margin = 3.5cm]{geometry}

\newtheorem{thm}{Theorem}[section]
\newtheorem{lem}[thm]{Lemma}
\theoremstyle{definition}

\newtheorem{prop}[thm]{Proposition}
\newtheorem{conj}[thm]{Conjecture}
\newtheorem{cor}[thm]{Corollary}

\newtheorem{rmk}[thm]{Remark}
\newtheorem{prob}[thm]{Problem}

\newtheorem{qst}[thm]{Question}

\DeclareMathOperator\sym{Sym}
\DeclareMathOperator\alt{Alt}

\DeclareMathOperator\Der{Der}
\DeclareMathOperator{\agl}{AGL}
\DeclareMathOperator\gl{GL}

\DeclareMathOperator\Fix{F}
\DeclareMathOperator{\pg}{PG}
\DeclareMathOperator{\ag}{AG}
\DeclareMathOperator{\stab}{Stab}


\letcs\replicate{prg_replicate:nn}

\begin{document}	

\title{On complete multipartite derangement graphs}

\author[A. Sarobidy Razafimahatratra]{Andriaherimanana Sarobidy Razafimahatratra}
\address{Department of Mathematics and Statistics, University of Regina,
  Regina, Saskatchewan S4S 0A2, Canada}\email{sarobidy@phystech.edu}
	\thanks{Department of Mathematics and Statistics, University of Regina,
		Regina, Saskatchewan S4S 0A2, Canada}

\begin{abstract}

 Given a finite transitive permutation group $G\leq \sym(\Omega)$, with $|\Omega|\geq 2$, the derangement graph $\Gamma_G$ of $G$ is the Cayley graph $\operatorname{Cay}(G,\Der(G))$, where $\Der(G)$ is the set of all derangements of $G$. Meagher et al. [On triangles in derangement graphs, {\it J. Combin. Theory Ser. A}, 180:105390, 2021] recently proved that $\sym(2)$ acting on $\{1,2\}$ is the only transitive group whose derangement graph is bipartite and any transitive group of degree at least three has a triangle in its derangement graph. They also showed that there exist transitive groups whose derangement graphs are complete multipartite.
  
 This paper gives two new families of transitive groups with complete multipartite derangement graphs. In addition, we prove that if $p$ is an odd prime and $G$ is a transitive group of degree $2p$, then the independence number of $\Gamma_{G}$ is at most twice the size of a point-stabilizer of $G$.
\end{abstract}

\subjclass[2010]{Primary 05C35; Secondary 05C69, 20B05}

\keywords{Derangement graph, cocliques, Erd\H{o}s-Ko-Rado  theorem, Cayley graphs}

\date{February 11, 2021}

\maketitle


\section{Introduction}

This paper is concerned with Erd\H{o}s-Ko-Rado (EKR) type theorems for finite transitive groups. The classical EKR Theorem is stated as follows.
\begin{thm}[Erd\H{o}s-Ko-Rado \cite{erdos1961intersection}]
	Suppose that $n,k\in \mathbb{N}$ such that $2k\leq n$. If $\mathcal{F}$ is a family of 
	$k$-subsets of $[n]:=\{1,2,\ldots,n\}$ such that $A\cap B \neq \varnothing$ for all $A,B \in \mathcal{F}$, then $|\mathcal{F}|\leq \binom{n-1}{k-1}$. Moreover, if $2k<n$, then equality holds if and only 
	if 
	$\mathcal{F}$ consists of all the subsets which contain a fixed element of $[n]$.\label{thm:EKR}
\end{thm}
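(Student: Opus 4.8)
The plan is to prove the cardinality bound by Katona's cyclic-ordering averaging argument, and to settle the equality case (under the strict hypothesis $2k<n$) by a separate local-to-global rigidity analysis.

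\emph{Setup and the key lemma.} I would regard a \emph{cyclic ordering} of $[n]$ as a placement of the $n$ points around a circle with rotations identified, so that there are $(n-1)!$ of them. Relative to a fixed cyclic ordering, call a $k$-subset an \emph{arc} if its $k$ elements are consecutive around the circle; each ordering then has exactly $n$ arcs. The crucial lemma is that, for any fixed cyclic ordering, at most $k$ members of an intersecting family $\mathcal{F}$ are arcs, provided $2k\le n$. To see this, assume some member $A\in\mathcal{F}$ is an arc (otherwise there is nothing to prove). Exactly $2k-1$ arcs meet $A$, namely $A$ together with $k-1$ arcs crossing one endpoint of $A$ and $k-1$ crossing the other. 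Because $2k\le n$, these $2(k-1)$ further arcs split into $k-1$ pairs, each consisting of two disjoint arcs; an intersecting family contains at most one arc from each such pair, so at most $(k-1)+1=k$ members of $\mathcal{F}$ are arcs of this ordering.

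\emph{Averaging.} I would then double-count incidences between cyclic orderings and those members of $\mathcal{F}$ that happen to be arcs. On one hand, each fixed $k$-set is an arc in exactly $k!\,(n-k)!$ of the $(n-1)!$ cyclic orderings, a number independent of the set. On the other hand, by the lemma each ordering contributes at most $k$ such incidences. Equating the two counts gives $|\mathcal{F}|\cdot k!\,(n-k)! \le k\,(n-1)!$, and dividing through yields $|\mathcal{F}|\le \dfrac{k\,(n-1)!}{k!\,(n-k)!}=\binom{n-1}{k-1}$, the desired bound.

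\emph{The equality case, and the main obstacle.} When $2k<n$ and $|\mathcal{F}|=\binom{n-1}{k-1}$, the averaging inequality must be tight, so in \emph{every} cyclic ordering exactly $k$ members of $\mathcal{F}$ are arcs. I would first classify the extremal local configurations: for $2k<n$, a family of $k$ pairwise-intersecting arcs must be a \emph{pencil}, that is, the $k$ arcs sharing a common point. Here the strictness $2k<n$ is essential, since at the boundary $2k=n$ there are additional, non-pencil maximum intersecting families, which is precisely why uniqueness fails in that case. The genuinely delicate step, and the one I expect to be the main obstacle, is to globalize this information: each cyclic ordering singles out a common point for its $k$ arcs, and I must show that these locally chosen points coincide across all orderings, so that a single element lies in every member of $\mathcal{F}$. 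I would carry this out by a propagation argument over cyclic orderings related by swapping two circularly-adjacent points, showing that the pencil point is forced to be preserved under such a move; since these moves connect all orderings, a common element emerges, and a short final count upgrades ``$\mathcal{F}$ is contained in a star'' to ``$\mathcal{F}$ is exactly the star.'' Alternatively, the same conclusion can be reached by a shifting/compression reduction that preserves $|\mathcal{F}|$ and reduces the extremal analysis to the star, which may be technically cleaner for the uniqueness statement.
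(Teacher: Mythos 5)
The paper does not prove this theorem at all: it is stated as classical background with a citation to the 1961 Erd\H{o}s--Ko--Rado paper, so there is no internal proof to compare yours against, and your proposal must be judged on its own terms. For the cardinality bound, your argument is Katona's cycle method, and it is complete and correct as written: exactly $2k-1$ arcs of a fixed cyclic ordering meet a given arc $A$; the $2(k-1)$ arcs other than $A$ pair off into $k-1$ pairs of disjoint arcs precisely because $2k\le n$; each $k$-set is an arc of exactly $k!\,(n-k)!$ of the $(n-1)!$ cyclic orderings; and the double count yields $|\mathcal{F}|\,k!\,(n-k)!\le k\,(n-1)!$, i.e.\ $|\mathcal{F}|\le\binom{n-1}{k-1}$. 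This is a different (and slicker) route than the original shifting-plus-induction proof of Erd\H{o}s, Ko and Rado, but since the paper supplies no proof, the comparison ends there.

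The equality case is where your proposal is a plan rather than a proof, and you say so yourself; two places need genuine work. First, the local classification --- that for $2k<n$ any $k$ pairwise-intersecting arcs form a pencil --- is true but not automatic. Writing the arcs other than $A$ as $L_i$ or $R_i$ according to the internal gap $i\in\{1,\dots,k-1\}$ of $A$ at which they end or begin, a pencil is exactly a threshold pattern $R\cdots R\,L\cdots L$; now $L_i$ and $R_j$ with $i<j$ are disjoint when $j-i\le n-2k$, but they \emph{do} intersect in the wrap-around when $j-i\ge n-2k+1$, which can happen whenever $n\le 3k-3$, so non-threshold patterns are not instantly excluded. One can exclude them by taking an inversion ($\epsilon_i=L$, $\epsilon_j=R$, $i<j$) with $j-i$ minimal: either choice of $\epsilon_{i+1}$ creates a strictly shorter inversion, so the minimal inversion has $j=i+1$, and then $L_i$ and $R_{i+1}$ are disjoint since $n>2k$, a contradiction. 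Second, the propagation of the pencil point across adjacent-swap moves of cyclic orderings is only named, not executed; it is the standard way to finish (for $k\ge2$ the $k$ arcs of a pencil intersect in a single point, so the pencil point is well defined, and orderings related by one swap share enough $\mathcal{F}$-arcs to force the same point), but as written it is an announcement rather than an argument, and the shifting alternative you mention carries its own bookkeeping, since equality must be tracked through each compression. Your remark that uniqueness genuinely fails at $2k=n$ is correct and correctly placed.
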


The EKR theorem has been well studied and generalized for numerous combinatorial objects in the past 50 years \cite{cameron2003intersecting,Frankl1977maximum,deza1983erdos,frankl1986erdos,godsil2016erdos,meagher2011erdHos,meagher2016erdHos,spiga2019erdHos,wilson1984exact}. Of interest to us is the 
generalization of Theorem~\ref{thm:EKR} for the symmetric group by Deza and Frankl in 
\cite{Frankl1977maximum}.

Given a finite transitive permutation group $G\leq \sym(\Omega)$, we say that the permutations $\sigma,\pi \in G$ are \emph{intersecting} if $\omega^\sigma = \omega^\pi$, for some $\omega \in \Omega$. A subset or family $\mathcal{F}$ of $G$ is \emph{intersecting} if any two permutations of $\mathcal{F}$ are intersecting. 

\begin{thm}[Deza-Frankl,\cite{Frankl1977maximum}]
	Let $\Omega$ be a set of size $ n\geq 2$. If $\mathcal{F} \subset \sym(\Omega)$ is an intersecting family, then $|\mathcal{F}|\leq (n-1)!$.\label{thm:deza-frankl}
\end{thm}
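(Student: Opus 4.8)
The plan is to bound $|\mathcal{F}|$ by a clique-cover argument: I would partition $\sym(\Omega)$ into $(n-1)!$ blocks, each of size $n$, having the property that any two \emph{distinct} permutations in the same block are non-intersecting. Since two permutations of $\mathcal{F}$ must intersect, $\mathcal{F}$ can contain at most one member of each block, and the bound $|\mathcal{F}|\le (n-1)!$ follows at once.

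To produce such a partition I would fix an $n$-cycle $\rho\in\sym(\Omega)$, set $C=\langle\rho\rangle$, and take the blocks to be the left cosets $gC$ for $g\in\sym(\Omega)$. These cosets partition the group, each has size $|C|=n$, and there are exactly $[\sym(\Omega):C]=n!/n=(n-1)!$ of them, which is precisely the target count.

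The only place with genuine content is the claim that two distinct elements $g\rho^{i}$ and $g\rho^{j}$ of a single coset (with $0\le i<j\le n-1$) never agree on any point. A common value $\omega^{g\rho^{i}}=\omega^{g\rho^{j}}$ forces $\alpha^{\rho^{j-i}}=\alpha$ for $\alpha=\omega^{g}$, i.e. the power $\rho^{j-i}$ fixes the point $\alpha$. Here I would invoke the elementary fact that a power $\rho^{k}$ of an $n$-cycle is fixed-point-free unless $n\mid k$ (for $n\nmid k$ it splits into $\gcd(n,k)$ cycles, each of length $n/\gcd(n,k)>1$). Since $1\le j-i\le n-1$, we have $n\nmid (j-i)$, so no such fixed point exists and $g\rho^{i},g\rho^{j}$ are non-intersecting, as claimed.

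With the claim established, the proof closes by counting: an intersecting $\mathcal{F}$ meets each of the $(n-1)!$ cosets of $C$ in at most one permutation, hence $|\mathcal{F}|\le (n-1)!$. I expect no serious obstacle beyond bookkeeping with the action convention in the intersecting computation; the entire argument rests on the single observation that the nonidentity powers of an $n$-cycle are derangements, which is exactly what makes each coset a clique in the derangement graph $\Gamma_{\sym(\Omega)}$, so that the $(n-1)!$ cosets form a clique cover and thereby bound the independence number (the largest intersecting family) from above. An alternative would be to run the Hoffman ratio bound on the eigenvalues of $\Gamma_{\sym(\Omega)}$, but that machinery is unnecessary for the inequality itself.
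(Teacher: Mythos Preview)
Your argument is correct and is precisely the approach the paper points to: although the paper merely cites Deza--Frankl for this theorem, in Section~2.1 it remarks that the EKR property for $\sym(n)$ follows from Lemma~2.1 (clique--coclique) once one exhibits a clique of size $n$ in $\Gamma_{\sym(n)}$, and your cosets $gC$ of the cyclic group $C=\langle\rho\rangle$ are exactly such cliques (indeed a full clique cover). One tiny bookkeeping slip: from $\omega^{g\rho^{i}}=\omega^{g\rho^{j}}$ under the paper's right-action convention you get that $\rho^{\,j-i}$ fixes $\alpha^{\rho^{i}}$ rather than $\alpha=\omega^{g}$, but this is immaterial since the conclusion you need is only that $\rho^{\,j-i}$ has some fixed point.
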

The characterization of the maximum intersecting families of $\sym(\Omega)$ was solved almost three decades later by Cameron and Ku \cite{cameron2003intersecting}, and independently by Larose and Malvenuto \cite{larose2004stable}.
\begin{thm}[\cite{cameron2003intersecting,larose2004stable}]
	Let $\Omega$ be a set of size $ n\geq 2$. If $\mathcal{F}\subset \sym(\Omega)$ is an intersecting family of maximum size, that is $|\mathcal{F}| = (n-1)!$, then $\mathcal{F}$ is a coset of a stabilizer of a point of $\sym(\Omega)$. In particular, there exist $i,j\in \Omega$ such that 
	\begin{align*}
		\mathcal{F} = \left\{ \sigma \in \sym(\Omega) \mid i^\sigma =j \right\}.
	\end{align*}\label{thm:cameron-ku}
\end{thm}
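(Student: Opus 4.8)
The plan is to recast the statement in the language of the derangement graph and then exploit the ratio (Hoffman--Delsarte) bound together with its equality case. Observe first that a family $\mathcal{F}\subset\sym(\Omega)$ is intersecting exactly when no two of its elements differ by a derangement, so intersecting families are precisely the cocliques of $\Gamma := \Gamma_{\sym(\Omega)} = \operatorname{Cay}(\sym(\Omega),\Der(\sym(\Omega)))$. The theorem therefore asserts that every maximum coclique of $\Gamma$ is a coset of a point-stabilizer. Since the cosets $S_{i,j}=\{\sigma\in\sym(\Omega) : i^\sigma=j\}$ are themselves intersecting of size $(n-1)!$, I will call these the \emph{canonical} cocliques and aim to show a maximum coclique must be one of them.

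First I would establish the bound with its tightness criterion. As $\Gamma$ is regular of valency $d=|\Der(\sym(\Omega))|$ on $n!$ vertices, the ratio bound gives $\alpha(\Gamma)\le n!/(1-d/\tau)$, where $\tau$ is the least eigenvalue of $\Gamma$; substituting $\tau=-d/(n-1)$ yields $\alpha(\Gamma)\le (n-1)!$, which reproves Theorem~\ref{thm:deza-frankl}. The decisive feature is the equality case: for any maximum coclique $S$ the balanced characteristic vector $v_S-\tfrac1n\mathbf 1$ lies in the $\tau$-eigenspace of $\Gamma$.

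I would then pin down this eigenspace representation-theoretically. Because $\Der(\sym(\Omega))$ is a union of conjugacy classes, $\Gamma$ is a normal Cayley graph whose eigenspaces are the isotypic components $V_\lambda$ of the regular representation, indexed by partitions $\lambda\vdash n$, with eigenvalue $\eta_\lambda=\tfrac{1}{\chi_\lambda(1)}\sum_{\sigma\in\Der(\sym(\Omega))}\chi_\lambda(\sigma)$. The spectral fact I would invoke is that the minimum $\tau=-d/(n-1)$ is attained on a single nontrivial component, namely $V_{(n-1,1)}$ (Renteln's computation of the least eigenvalue, with uniqueness due to Ku and Wales). The equality case then forces
\begin{equation*}
v_S\in V_{(n)}\oplus V_{(n-1,1)}.
\end{equation*}
Crucially, $V_{(n)}\oplus V_{(n-1,1)}$ is precisely the span of the indicators $v_{S_{i,j}}$, since these are the matrix coefficients $\sigma\mapsto[i^\sigma=j]$ of the natural permutation representation $\cx^\Omega\cong V_{(n)}\oplus V_{(n-1,1)}$. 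Hence $v_S=\sum_{i,j}a_{ij}\,v_{S_{i,j}}$ for some real matrix $A=(a_{ij})$.

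The hard part is the final rigidity step: passing from this linear description to the conclusion that $S$ is a single canonical coclique. The constraint $v_S\in\{0,1\}^{\sym(\Omega)}$ of total weight $(n-1)!$ translates to $\langle A,P(\sigma)\rangle\in\{0,1\}$ for every permutation matrix $P(\sigma)$, with exactly $(n-1)!$ ones. I would analyze this by evaluating on permutations that differ by a transposition or a $3$-cycle to control the entries of $A$ and force it to be a $0$--$1$ matrix supported on one entry, i.e. $S=S_{i_0,j_0}$. This finite combinatorial rigidity --- not the spectral input --- is where the genuine difficulty lies; it can alternatively be settled by the more elementary argument of Cameron and Ku, who translate $\mathcal F$ so that it contains the identity (whence every member fixes a point) and rule out non-coset families of size $(n-1)!$ by a direct counting analysis.
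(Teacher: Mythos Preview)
The paper does not contain a proof of this statement: Theorem~\ref{thm:cameron-ku} is quoted from Cameron--Ku and Larose--Malvenuto as background and is never proved in the text. There is therefore nothing in the paper to compare your argument against.

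That said, your outline is the standard spectral/module-method proof (essentially the Godsil--Meagher approach): identify intersecting families with cocliques of $\Gamma_{\sym(\Omega)}$, apply the ratio bound with the Renteln/Ku--Wales eigenvalue information to conclude that the balanced indicator of a maximum coclique lies in the permutation module $V_{(n)}\oplus V_{(n-1,1)}$, and then run a rigidity argument to force the $0$--$1$ vector to be a canonical indicator. The first two steps are correctly stated. The genuine content, as you yourself flag, is the last step, and here your proposal remains a sketch: ``evaluating on permutations that differ by a transposition or a $3$-cycle'' is a plausible plan, but you have not actually executed it, and this step is exactly where the work lies. Pointing to Cameron--Ku's direct argument as an alternative is fine, but then you are deferring the characterization entirely to their paper rather than proving it. If you want a self-contained proof along your spectral route, you need to complete the Boolean rigidity argument in full.
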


The natural question that arises is whether analogues of Theorem~\ref{thm:deza-frankl} and Theorem~\ref{thm:cameron-ku}  hold for different subgroups of $\sym(\Omega)$, i.e., permutation groups of degree $n$. 
All groups considered in this paper are finite. 
We are interested in the following  extremal problem. 
\begin{prob}
	Let $G\leq \sym(\Omega)$ be transitive.
	\begin{itemize}
		\item[(1)] What is the largest size of  an intersecting family of $G$?
		\item[(2)] If $\mathcal{F}$ is an intersecting family of $G$ of maximum 
		size, then describe the structure of $\mathcal{F}$.
	\end{itemize}\label{main-prob}
\end{prob}

Not surprisingly, the answer to this problem depends on the structure of the subgroup of $\sym(\Omega)$. For instance, if $\sigma_1=(1\ 2)(3\ 4)$, $\sigma_2 = (3\ 4)(5\ 6)$ and $\tau = (1\ 3\ 5)(2\ 4\ 6)$ are permutations of $\Omega = \{1,2,3,4,5,6\}$, then $\langle \sigma_1,\sigma_2,\tau \rangle$ has its point-stabilizers of size $2$ but $$\displaystyle\mathcal{F} = \{id,(1\ 2)(3\ 4),(3\ 4)(5\ 6),(1\ 2)(5\ 6)\}$$ is a larger intersecting family. More examples of transitive permutation groups having larger intersecting families than point-stabilizers are given in \cite{meagher2015erdos,li2020erd,meagher180triangles}. Due to this, we consider the following definitions. We say that the group $G$ has the \emph{EKR property} if any intersecting family of $G$ has size at most $\frac{|G|}{|\Omega|}$ and $G$ has the \emph{strict-EKR property} if it has the EKR property and an intersecting family of size $\frac{|G|}{|\Omega|}$ is a coset of a stabilizer of a point.

A typical approach in solving EKR-type problems is reducing it into a problem on a graph theoretical 
invariant. The \emph{derangement graph} $\Gamma_{G}$ of $G\leq \sym(\Omega)$ is the graph whose vertex set is $G$ and two permutations $\sigma,\pi$ are adjacent if and only if they are not intersecting; that is, $\omega^\sigma \neq \omega^\pi$, for every $\omega\in \Omega$. In other words, $\Gamma_{G}$ is the Cayley graph $\operatorname{Cay}(G,\Der(G))$, where $\Der(G)$ is the set of all derangements of $G$. Then, a family $\mathcal{F}\subset G$ is intersecting if and only if $\mathcal{F}$ is an \emph{independent set} or a \emph{coclique} of the derangement graph $\Gamma_{G}$. Therefore, Problem~\ref{main-prob} is equivalent to finding the size of the maximum cocliques $\alpha(\Gamma_{G})$ and the structures of the cocliques of size $\alpha(\Gamma_{G})$.

Our long term objective is to classify the transitive permutation groups that have the EKR property and strict-EKR property. A big step toward this classification is the result of Meagher, Spiga and Tiep \cite{meagher2016erdHos}, which says that every finite $2$-transitive group has the EKR property. More examples of primitive groups having the EKR property are given in \cite{ahmadi2014new,ahmadi2015erdHos,behajaina20203,ellis2011intersecting,meagher20202,meagher2011erdHos,spiga2019erdHos}. 

We are motivated to find more transitive groups that do not have the EKR property. The group $\langle \sigma_1,\sigma_2,\tau \rangle$ given above is special in the sense that its derangement graph is a complete tri-partite graph. A recent result by Meagher, Spiga and the author \cite{meagher180triangles} brought to light the existence of many transitive groups that do not have the EKR property. The most important of these are the transitive groups whose derangement graphs are complete multipartite graphs. If $G\leq \sym(\Omega)$ is transitive and $\Gamma_G$ is a complete multipartite graph, then it is easy to see that the part $H$ of $\Gamma_{G}$, which contains the identity element $id$, consists of the elements with at least one fixed point. Moreover, every element of $G\setminus H$ is a derangement. Therefore, $H$ is a maximum coclique of $\Gamma_{G}$ and $H$ is the union of all the point-stabilizers of $G$. Thus, $G$ does not have the EKR property unless $H = \{id\}$. 
An important result  on the structure of derangement graphs of transitive groups is given in the next theorem.

\begin{thm}[\cite{meagher180triangles}]
	Let $G\leq \sym(\Omega)$ be transitive. Then, $\Gamma_G$ is bipartite if and only if $|\Omega|\leq 2$. Further, if $|\Omega|\geq 3$, then $\Gamma_G$ contains a triangle.\label{lem:not-bipartite}
\end{thm}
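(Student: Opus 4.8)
The plan is to first translate the triangle question into a purely multiplicative one. Since $\Gamma_G=\operatorname{Cay}(G,\Der(G))$ is vertex-transitive, it contains a triangle if and only if it contains one through the identity, which happens precisely when there are $a,b\in\Der(G)$ with $ab\in\Der(G)$: the triangle is then $\{id,a,ab\}$, whose three edge-differences are $a$, $b=a^{-1}(ab)$ and $ab$, all derangements. Because any triangle is an odd closed walk, producing such a triangle when $|\Omega|\ge 3$ also shows $\Gamma_G$ is not bipartite; and for $|\Omega|=2$ the only transitive group is $\sym(2)$, with $\Gamma_G\cong K_2$ bipartite. So the whole theorem reduces to the claim that \emph{every transitive $G$ of degree $n\ge 3$ admits $a,b\in\Der(G)$ with $ab\in\Der(G)$.}

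The main constructive tool I would use is the following observation: if $G$ has a derangement $a$ all of whose cycles have length at least $3$, then $a^2$ is again fixed-point-free, so $a,a,a^2$ are derangements and $\{id,a,a^2\}$ is a triangle. In particular this disposes of any group containing an $n$-cycle (so the base case $n=3$ is immediate), and, via the Fein--Kantor--Schacher theorem that a transitive group has a derangement of prime-power order, of every group possessing a derangement of \emph{odd} prime-power order, since all of its cycle lengths are then odd prime powers $\ge 3$.

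To cut down the remaining groups I would argue by induction on $n$ using block systems. If $G$ is imprimitive with an invariant partition into $b$ blocks and $b\ge 3$, then the induced action $G^{\mathcal B}\le\sym(\mathcal B)$ is transitive of degree $b<n$, so by induction $\Gamma_{G^{\mathcal B}}$ has a triangle, that is, block-derangements $\bar a,\bar b$ with $\bar a\bar b$ again a block-derangement. Any preimages $a,b\in G$ then move every block, hence fix no point, so $a,b,ab\in\Der(G)$ and the triangle lifts. This reduces matters to primitive groups together with the few imprimitive groups whose only nontrivial block systems have exactly two blocks.

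The hard part will be the residual case in which every derangement of $G$ contains a $2$-cycle: by the previous paragraphs the prime-power derangement supplied by Fein--Kantor--Schacher is then forced to be a $2$-element, and the single-element trick fails because within a cyclic group the ``good'' exponents are all odd while an odd plus an odd is even. Here I would instead combine two distinct conjugate derangements: starting from a fixed-point-free involution $a$, every conjugate $a^g$ is again a fixed-point-free involution, and $\{id,a,a\,a^g\}$ is a triangle as soon as $a\,a^g$ is fixed-point-free for some $g\in G$. The crux is therefore to guarantee such a $g$, that is, to rule out the pathological configuration in which $a\,a^g$ has a fixed point for \emph{every} $g$; I expect to extract this from transitivity and primitivity, possibly together with the $2$-power cycle structure of the element, and this verification is the step I anticipate needing the most care.
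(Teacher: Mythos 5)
This theorem is not proved in the present paper at all: it is quoted as background, with citation, from \cite{meagher180triangles}, so your attempt has to be measured against the proof in that reference, where it constitutes the technical core of the article. The parts of your plan that are worked out are correct and do parallel the opening reductions there: the translation of a triangle into $a,b,ab\in\Der(G)$; the observation that a derangement whose cycles all have length at least $3$ gives the triangle $\{id,a,a^2\}$; the lifting of triangles through a block system with at least three blocks (an element whose image deranges the blocks is itself a derangement); and the use of Fein--Kantor--Schacher to force, in what remains, a derangement that is a $2$-element.

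The genuine gap is that the residual case you leave open is not a verification to be supplied later --- it \emph{is} the theorem --- and the strategy you sketch for it fails concretely. First, you conflate ``$2$-element derangement'' with ``fixed-point-free involution'': in your residual case every derangement has a $2$-cycle, so a $2$-element derangement $a$ of order $2^k\geq 4$ has the property that every involutory power of $a$ fixes the points of that $2$-cycle; a fixed-point-free involution need not exist at all, and FKS does not provide one. Second, even when fixed-point-free involutions exist, the construction $\{id,a,a a^g\}$ can fail for \emph{every} $g$: take the Klein four-group acting regularly on $4$ points. This group lies squarely in your residual case (it is imprimitive, all of its nontrivial block systems have exactly two blocks, and every derangement is a fixed-point-free involution), but every such involution $a$ is central, so $a a^g = a^2 = id$ for all $g$, and your construction never produces a triangle --- even though $\Gamma_G = K_4$ is full of them (built from two \emph{distinct, non-conjugate} involutions $a,b$ with $ab$ again a derangement). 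So the pathological configuration you hope to ``rule out from transitivity and primitivity'' actually occurs; what is really needed is the global statement that in the residual case some pair of derangements multiplies to a derangement, and that is exactly the hard content of the theorem. The proof in \cite{meagher180triangles} devotes most of its length to this configuration, through a structural analysis of transitive groups all of whose derangements are $2$-elements (building on classification-type results for such groups), not through a conjugation trick. As written, your proposal settles only the easy cases.
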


Our motivation for this work is to find more transitive groups having complete multipartite derangement graphs.
In this paper, we give two infinite families of transitive groups whose derangement graphs are complete multipartite. 
Our main results are stated as follows.

\begin{thm}
	Let $p$ be a prime and let $q=p^k$, for some $k\geq 1$. Then, there exists a transitive group $G_q$, of degree $q(q+1)$, such that $\Gamma_{G_q}$ is a complete $(q+1)$-partite graph.\label{thm:main}
\end{thm}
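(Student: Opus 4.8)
The plan is to first convert the statement into a purely group-theoretic one, and then exhibit $G_q$ explicitly. The key observation I would record, sharpening the discussion in the Introduction preceding Theorem~\ref{thm:main}, is: for a transitive $G\leq\sym(\Omega)$ the derangement graph $\Gamma_G=\operatorname{Cay}(G,\Der(G))$ is complete multipartite if and only if the set $F$ of all elements of $G$ having at least one fixed point is a subgroup. Indeed $F$ is always closed under conjugation (if $g$ fixes $\omega$ then $xgx^{-1}$ fixes $\omega^{x}$), so once $F$ is a subgroup it is normal; moreover $\Der(G)=G\setminus F$, and in $\operatorname{Cay}(G,G\setminus F)$ two vertices $g,h$ are adjacent exactly when $g^{-1}h\notin F$, i.e.\ when $gF\neq hF$. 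Hence $\Gamma_G$ is the complete multipartite graph whose parts are the cosets of $F$, with $[G:F]$ parts. It therefore suffices to construct, for each prime power $q$, a faithful transitive action of degree $q(q+1)$ in which the elements with a fixed point form a (normal) subgroup of index $q+1$.

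For the construction I would take $G_q=\agl(1,q^2)=\mathbb{F}_{q^2}\rtimes\mathbb{F}_{q^2}^{\times}$, the one-dimensional affine group over $\mathbb{F}_{q^2}$, but acting on a set of size $q(q+1)$ rather than on its natural $q^2$ points. Fixing an identification $\mathbb{F}_{q^2}\cong\mathbb{F}_q^2$, the elements of $G_q$ are the $\mathbb{F}_q$-affine maps $x\mapsto ax+b$ with $a\in\mathbb{F}_{q^2}^{\times}$ and $b\in\mathbb{F}_{q^2}$, and I let $G_q$ act on the set $\Omega$ of the $q(q+1)$ lines of the affine plane $\ag(2,q)$; these split into $q+1$ parallel classes (the $\mathbb{F}_q$-subspace directions), each of size $q$. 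The candidate subgroup is
\[
F=\{\,x\mapsto \lambda x+b : \lambda\in\mathbb{F}_q^{\times},\ b\in\mathbb{F}_{q^2}\,\}=\mathbb{F}_{q^2}\rtimes\mathbb{F}_q^{\times},
\]
which is normal in $G_q$ since $\mathbb{F}_q^{\times}\trianglelefteq\mathbb{F}_{q^2}^{\times}$, with index $[\mathbb{F}_{q^2}^{\times}:\mathbb{F}_q^{\times}]=q+1$. I would first check transitivity and faithfulness: the multiplicative part $\mathbb{F}_{q^2}^{\times}$ acts on the $q+1$ directions through the quotient $\mathbb{F}_{q^2}^{\times}/\mathbb{F}_q^{\times}\cong\mathbb{Z}_{q+1}$ regularly, translations act transitively within each parallel class, and only the identity fixes every line; so $G_q$ is genuinely a transitive group of degree $q(q+1)$.

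The heart of the proof is to verify that $F$ is exactly the set of elements fixing at least one line. For membership I would split into cases. If $\lambda=1$ the map is the translation $x\mapsto x+b$, which fixes every line whose direction contains $b$; if $\lambda\neq1$ the map $x\mapsto\lambda x+b$ has the fixed point $x_0=b/(1-\lambda)$ and acts as the $\mathbb{F}_q$-scalar $\lambda$ about $x_0$, so it fixes all $q+1$ lines through $x_0$. Either way every element of $F$ fixes a line. Conversely, if $a\in\mathbb{F}_{q^2}^{\times}\setminus\mathbb{F}_q^{\times}$ then multiplication by $a$ preserves no one-dimensional $\mathbb{F}_q$-subspace of $\mathbb{F}_{q^2}$ (from $ae\in\mathbb{F}_q e$ one gets $a\in\mathbb{F}_q$), hence $x\mapsto ax+b$ preserves no direction and fixes no line. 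Thus $g\in G_q$ fixes a line if and only if $g\in F$, and by the first paragraph $\Gamma_{G_q}$ is complete $(q+1)$-partite.

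I expect the only delicate point to be this last (derangement) step, and it is worth isolating why the scalar maps must be placed inside $F$. A more naive attempt would realise the same block structure as $\mathbb{F}_q^2\rtimes C$ with $C$ cyclic of order $q+1$ acting on the $q+1$ directions; but for odd $q$ no subgroup of $\gl(2,q)$ of order $q+1$ can act semiregularly on $\pg(1,q)$, because it has even order and hence an involution, and every involution of $\gl(2,q)$ fixes at least one direction (the central $-I$ fixes all of them). Absorbing the full scalar group $\mathbb{F}_q^{\times}$—in particular $-I$—into $F$ is exactly what removes this obstruction: the acting quotient becomes $G_q/F\cong\mathbb{F}_{q^2}^{\times}/\mathbb{F}_q^{\times}\cong\mathbb{Z}_{q+1}$, the image of the Singer cycle, which permutes the $q+1$ directions regularly for every prime power $q$. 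This is why the construction is uniform in $q$, and as a sanity check I would verify the base case $q=2$, where the scalars are trivial, $G_q=\agl(1,4)\cong A_4$ has degree $6$, and one recovers the known complete tripartite example.
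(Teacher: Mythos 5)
Your proposal is correct and is essentially the paper's own proof in different notation: your $\agl(1,q^2)=\mathbb{F}_{q^2}\rtimes\mathbb{F}_{q^2}^{\times}$ acting on the lines of $\ag(2,q)$ is exactly the paper's $G_q(A)=\left\{(b,B)\mid B\in\langle A\rangle,\ b\in\mathbb{F}_q^2\right\}$, since a Singer subgroup of $\gl(2,q)$ is precisely $\mathbb{F}_{q^2}^{\times}$ acting by multiplication on $\mathbb{F}_{q^2}\cong\mathbb{F}_q^2$; your subgroup $F$ is the paper's $M_q$, your opening characterization is its Lemma~2.9, and your verification that elements of $F$ have fixed lines (translations, and scalar maps about their fixed point) coincides with its Corollary~3.5. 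The only local divergence is how one sees that everything outside $F$ is a derangement: you argue directly that $a\notin\mathbb{F}_q$ preserves no direction (since $av\in\mathbb{F}_q v$ with $v\neq 0$ forces $a\in\mathbb{F}_q$), whereas the paper proves $\Fix(G_q(A))\leq M_q$ by an orbit-stabilizer count, exhibiting $q(q-1)$ elements of $M_q$ stabilizing a line and checking this exhausts the stabilizer -- your field-theoretic step is slightly more direct, but the overall route is the same.
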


%

The following was conjectured in \cite{meagher180triangles} on the existence of complete multipartite derangement graphs.
\begin{conj}
	 If $n$ is even but not a power of $2$, then there is a transitive group $G$ of degree $n$ such that $\Gamma_G$ is a complete multipartite graph with $n/2$ parts.\label{conj:complete-multipartite}
\end{conj}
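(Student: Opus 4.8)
The plan is to realise the desired group as a semidirect product built from a suitable binary cyclic code. The starting point is the structural remark already recorded in the introduction: if $\Gamma_G$ is complete multipartite, then the part containing the identity is exactly the set of elements of $G$ having a fixed point, and this part is a union of point-stabilizers. I would first upgrade this to an exact criterion: $\Gamma_G=\operatorname{Cay}(G,\Der(G))$ is complete multipartite if and only if the set $H:=G\setminus\Der(G)$ of non-derangements is a subgroup, in which case the parts are the cosets of $H$ and their number is $[G:H]$. This follows from the elementary fact that a Cayley graph on a symmetric connection set $S\not\ni\mathrm{id}$ is a disjoint union of cliques precisely when $S\cup\{\mathrm{id}\}$ is a subgroup; here the complement of $\Gamma_G$ has connection set $H\setminus\{\mathrm{id}\}$. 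To force $n/2$ parts I therefore need a transitive $G$ of degree $n$ whose non-derangements form a subgroup $H$ of index $n/2$, that is, of order $2|G_\omega|$.

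Next I would pin down the shape such a $G$ must have and then reverse-engineer it. Since conjugation permutes the point-stabilizers, their union $H$ is conjugation-invariant, so once it is a subgroup it is normal. Writing $m=n/2$, each $H$-orbit has length $[H:G_\omega]=2$, so these orbits form a $G$-invariant partition of $\Omega$ into $m$ blocks of size $2$; moreover $H$ is the kernel of the action on blocks, and $G/H$, having order $m$ and acting transitively on the $m$ blocks, acts regularly. Consequently $H$ fixes every block setwise and acts within each block either trivially or by the transposition, so $H$ embeds as a subspace of $\mathbb{F}_2^m$ (one coordinate per block, a $1$ recording a flip). In these terms an element $v\in H$ is a non-derangement precisely when $v\neq\mathbf 1:=(1,\dots,1)$, and $G$ is transitive precisely when $H$ has full support; finally, every element of $G\setminus H$ is automatically a derangement, since it moves each block. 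Thus the whole problem reduces to the following: given a regular permutation group $R$ of degree $m$ acting on $\mathbb{F}_2^m$ by permuting coordinates, find an $R$-invariant subspace $H\le\mathbb{F}_2^m$ with $\mathbf 1\notin H$ and full support, and then set $G=H\rtimes R$ acting on $[m]\times\{0,1\}$.

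For the construction I would take $R=C_m$ cyclic and identify $\mathbb{F}_2^m$ with $\mathbb{F}_2[x]/(x^m-1)$, so that the $R$-invariant subspaces are exactly the cyclic codes, i.e. the ideals $(h)$ with $h\mid x^m-1$, and $\mathbf 1$ corresponds to $(x^m-1)/(x-1)$. Since $n$ is even but not a power of $2$, I may write $m=n/2=2^a t$ with $t\ge 3$ odd, giving $x^m-1=(x^t-1)^{2^a}$ and hence $(x-1)$-adic valuations $2^a$ for $x^m-1$ and $2^a-1$ for $\mathbf 1$ (using that $t$ is odd, so $x-1$ divides $x^t-1$ exactly once). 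I would then choose
\[
 H=\bigl((x-1)^{2^a}\bigr).
\]
Because the valuation of $\mathbf 1$ is $2^a-1<2^a$, the generator does not divide $\mathbf 1$, so $\mathbf 1\notin H$; and since $2^a<m$ the code $H$ is nonzero, hence has full support (a coordinate on which a cyclic code vanishes identically would, by shift-invariance, force all coordinates to vanish). The resulting $G=H\rtimes C_m$ is then transitive of degree $2m=n$ with non-derangements equal to $H$, so $\Gamma_G$ is complete multipartite with $[G:H]=|C_m|=m=n/2$ parts, as required.

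The step I expect to be the genuine obstacle is producing $H$ when $m$ itself is even (for instance $n\equiv 0\pmod 4$ with an odd factor, such as $n=12$). When $m$ is odd one can simply take the even-weight subspace $\{v:\sum v_i=0\}$, which avoids $\mathbf 1$ because $\mathbf 1$ then has odd weight; but for even $m$ this subspace contains $\mathbf 1$, and in fact the even-weight code is the only $R$-invariant hyperplane (the sole nonzero $R$-fixed functional being the weight), so no codimension-one choice can work. This is exactly what the repeated factor $(x-1)^{2^a}$ buys in characteristic $2$: passing to codimension $2^a$ separates $\mathbf 1$, which carries only $2^a-1$ copies of $x-1$, from the code. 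The remaining verifications—that $C_m$ normalizes $H$ (precisely the cyclic-invariance of the code), that $G$ is genuinely transitive, and the coclique/subgroup equivalence of the first paragraph—are routine, and the case $n=6$ recovers $\mathrm{Alt}(4)$ acting on $6$ points, matching the example in the introduction.
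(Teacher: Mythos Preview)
Your argument is correct, and in fact it establishes the statement in full---which the paper does not: the paper records this as a conjecture, proving only the cases $n=2\ell$ with $\ell$ odd (quoted from earlier work) and $n=4\ell$ with $\ell$ odd (Theorem~\ref{thm:main-construction}), and explicitly leaving the remaining cases $n=2^i\ell$, $i\ge 3$, as an open problem in the final section. So there is no proof in the paper to compare against directly.

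Comparing your approach to the paper's partial construction for $n=4\ell$: the paper writes down explicit permutations generating a group isomorphic to $(C_2)^{\ell-1}\rtimes D_\ell$ and checks by hand that the subgroup of non-derangements is the $(C_2)^{\ell-1}$ factor, a case-specific computation that does not visibly extend. Your route is both more conceptual and more general: reducing to the search for a $C_m$-invariant subspace of $\mathbb{F}_2^m$ that avoids $\mathbf 1$ and has full support converts the problem into a question about binary cyclic codes, and the single choice $H=\bigl((x-1)^{2^a}\bigr)$, exploiting the repeated factor of $x-1$ in $x^m-1$ over $\mathbb{F}_2$, handles every admissible $n$ uniformly. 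The verifications you list (cyclic invariance gives normality, full support gives transitivity, $\mathbf 1\notin H$ forces every element of $H$ to have a zero coordinate and hence a fixed point, and nontrivial $c\in C_m$ moves every block) are all sound. Even where the ranges overlap the two constructions give different witnesses: for $n=4\ell$ your group is $H\rtimes C_{2\ell}$ with $|H|=2^{2\ell-2}$, not the paper's $(C_2)^{\ell-1}\rtimes D_\ell$.
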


A transitive group of degree $n = 2\ell$, where $\ell$ is odd, with a complete $\ell$-partite derangement graph was given  in \cite[Lemma~5.3]{meagher180triangles}. We generalize this construction to find another family of transitive groups with complete multipartite derangement graphs. This result further reinforces Conjecture~\ref{conj:complete-multipartite}.
\begin{thm}
	For any odd $\ell$, there exists a transitive permutation group of degree $4\ell$ whose 
	derangement graph is a complete $2\ell$-partite graph.\label{thm:main-construction}
\end{thm}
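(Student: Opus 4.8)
The plan is to translate the statement into a group-theoretic criterion and then realise the desired group as a direct product. For a transitive $G\le\sym(\Omega)$, write $H=G\setminus\Der(G)$ for the set of non-derangements. Because $\Gamma_G=\operatorname{Cay}(G,\Der(G))$ and $\Der(G)$ is closed under taking inverses, two vertices $x,y$ are non-adjacent in $\Gamma_G$ precisely when $xy^{-1}\in H$. Since $id\in H$, non-adjacency is reflexive and symmetric, and it is transitive if and only if $H$ is closed under multiplication; thus $\Gamma_G$ is complete multipartite if and only if $H$ is a subgroup of $G$, in which case its non-adjacency classes are the right cosets of $H$ and $\Gamma_G$ is complete $[G:H]$-partite. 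This is the converse of the structural observation recalled in the introduction, and it reduces the theorem to one task: construct a transitive group of degree $4\ell$ whose non-derangements form a subgroup of index $2\ell$.

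For the construction, first I would invoke the degree-$2\ell$ group $G_1\le\sym(\Omega_1)$ of \cite[Lemma~5.3]{meagher180triangles}, whose derangement graph is complete $\ell$-partite; by the criterion above, $H_1:=G_1\setminus\Der(G_1)$ is a subgroup of index $\ell$. I then set $G:=G_1\times\sym(2)$ and let it act on $\Omega:=\Omega_1\times\{1,2\}$ by the product action $(\omega,i)^{(g,h)}=(\omega^{g},i^{h})$. Since $G_1$ is transitive on $\Omega_1$ and $\sym(2)$ is transitive on $\{1,2\}$, this action is transitive, and its degree is $2\ell\cdot 2=4\ell$.

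It remains to identify the non-derangements of $G$. An element $(g,h)$ fixes a point $(\omega,i)$ if and only if $g$ fixes $\omega$ and $h$ fixes $i$; as the two coordinates may be chosen independently, $(g,h)$ has a fixed point if and only if both $g$ and $h$ do. The identity is the only element of $\sym(2)$ having a fixed point, so the non-derangements of $G$ are exactly the elements of $H_1\times\{id\}$. This set is a subgroup of $G$ (because $H_1\le G_1$), of index $2[G_1:H_1]=2\ell$. By the criterion of the first paragraph, $\Gamma_G$ is complete $2\ell$-partite, which is the assertion of Theorem~\ref{thm:main-construction}.

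The only genuinely substantive step is the observation that $(g,h)$ has a fixed point exactly when each coordinate does; the transitivity, the degree count, and the index computation are then routine. I expect the main conceptual point — and the reason this yields a generalisation of \cite[Lemma~5.3]{meagher180triangles} — to be that the product action simultaneously doubles the number of parts (from $\ell$ to $2\ell$) and the degree (from $2\ell$ to $4\ell$), producing precisely the complete $2\ell$-partite graph required and lending further support to Conjecture~\ref{conj:complete-multipartite}.
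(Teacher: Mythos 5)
Your proof is correct, and it takes a genuinely different route from the paper's. The paper builds an explicit group from scratch: it constructs $H\cong C_2^{\ell-1}$ generated by certain products of transpositions, proves by a parity argument that $H$ is intersecting, builds a dihedral group $\langle c,\tau\rangle\cong D_\ell$ normalizing $H$, checks that $G=H\rtimes\langle c,\tau\rangle$ is transitive of degree $4\ell$, and identifies $\Fix(G)=H$ via a stabilizer-order count, so that Lemma~\ref{lem:complete-multipartite} applies. You instead (i) re-derive, directly from the Cayley-graph definition, the criterion that $\Gamma_G$ is complete multipartite exactly when the set of non-derangements is a subgroup, with the right cosets as parts --- this is equivalent to combining Lemma~\ref{lem:intersecting-subgroups} with Lemma~\ref{lem:complete-multipartite}, and your coset argument for it is sound --- and (ii) bootstrap off the known degree-$2\ell$ group $G_1$ of \cite[Lemma~5.3]{meagher180triangles} by forming the product action of $G_1\times\sym(2)$ on $\Omega_1\times\{1,2\}$. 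Your key observation, that an element of a product action has a fixed point if and only if each coordinate does, is correct and immediately gives that the non-derangements of the product are $H_1\times\{id\}$, a subgroup of index $2\ell$. What the paper's route buys is a self-contained construction of a structurally new family $(C_2^{\ell-1})\rtimes D_\ell$, in keeping with its stated aim of generalizing the construction of \cite[Lemma~5.3]{meagher180triangles}; what your route buys is brevity, modularity, and iterability: taking further $\sym(2)$ factors yields transitive groups of degree $2^i\ell$ with complete $2^{i-1}\ell$-partite derangement graphs for every $i\geq 2$, which would settle not only Conjecture~\ref{conj:complete-multipartite} but also the open problem on degree $2^i\ell$ posed in the paper's final section. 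Two minor caveats: unlike the paper's argument, yours is not self-contained, relying on \cite[Lemma~5.3]{meagher180triangles} as a black box (legitimate, since the paper quotes that result as known); and, exactly as with the paper's own construction, everything implicitly requires $\ell\geq 3$, since for $\ell=1$ the statement would contradict Theorem~\ref{lem:not-bipartite} and the required degree-$2$ input group with a complete $1$-partite derangement graph does not exist.
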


The \emph{intersection density} $\rho(G)$ of a permutation group $G$ was introduced in 
\cite{li2020erd,meagher180triangles} as the ratio between the size of the largest intersecting 
families of $G$ and the size of the largest point-stabilizer of $G$. That is, if $G\leq \sym(\Omega)$, then 
\begin{align}
	\rho(G):= \frac{\max\{  |\mathcal{F}| \ :  \mathcal{F} \subset G  \mbox{ is 
		intersecting}\}}{\max_{\omega\in \Omega} |G_\omega|}.\label{eq:intersection-density}
\end{align}
For any $n\in \mathbb{N}$, we define $\mathcal{I}_n := \{ \rho(G)  \mid G \mbox{ is transitive of degree }n \}$ and $I(n) := \max 
\mathcal{I}_n$. The following was conjectured in \cite{meagher180triangles}.
\begin{conj}[\cite{meagher180triangles}]\hfill
	\begin{enumerate}
		\item If $n =pq$ where $p$ and $q$ are odd primes, then $I(n) = 1$.
		\item If $n = 2p$ where $p$ is prime, then $I(n) = 2$.\label{conj:4}
	\end{enumerate}\label{conj:main}
\end{conj}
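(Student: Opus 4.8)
The plan is to establish part (2) of Conjecture~\ref{conj:main}, namely $I(2p)=2$ for every odd prime $p$, which is precisely the content flagged in the abstract; part (1) appears to require genuinely different tools and I comment on it at the end. The lower bound $I(2p)\ge 2$ I would obtain for free from the construction of \cite[Lemma~5.3]{meagher180triangles}: taking $\ell=p$ there produces a transitive group $G$ of degree $2p$ whose derangement graph is complete $p$-partite, so its unique maximum coclique is a part of size $|G|/p=2|G_\omega|$, giving $\rho(G)=2$. The whole difficulty is therefore the \emph{upper} bound: for \emph{every} transitive $G\le\sym(\Omega)$ with $|\Omega|=2p$ one must show $\alpha(\Gamma_G)\le 2|G_\omega|=|G|/p$, equivalently $\rho(G)\le 2$.

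My approach avoids eigenvalues and rests on the clique--coclique bound. Since $\Gamma_G$ is a Cayley graph it is vertex-transitive, so $\alpha(\Gamma_G)\cdot\omega(\Gamma_G)\le|G|$; hence it suffices to exhibit a clique of size $p$ in $\Gamma_G$, because then $\alpha(\Gamma_G)\le|G|/p=2|G_\omega|$. A clique in $\Gamma_G$ is a set of pairwise non-intersecting permutations, and I would produce one as a cyclic subgroup: if $g\in G$ is a \emph{fixed-point-free} element of order $p$, then on $2p$ points it is necessarily a product of two $p$-cycles, so every nontrivial power $g^k$ ($1\le k\le p-1$) is again a product of two $p$-cycles and hence a derangement; therefore $\langle g\rangle$ is a clique of size $p$. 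Thus the whole problem reduces to the purely group-theoretic claim that \emph{every transitive group of degree $2p$ contains a fixed-point-free element of order $p$.}

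To prove this claim I would first record the local fact that a Sylow $p$-subgroup $P$ of $G$ has exponent $p$: an element of order $p^2$ would require a cycle of length $p^2>2p$, impossible on $2p$ points. The argument then splits by imprimitivity. If $G$ is imprimitive with $p$ blocks of size $2$, let $\phi\colon G\to\sym(\text{blocks})$ be the induced action; the block group is transitive of degree $p$, so $\phi(P)$ is its Sylow $p$-subgroup, generated by a $p$-cycle, and since $P$ has exponent $p$ any $g\in P\setminus\ker\phi$ has order $p$ and maps to a $p$-cycle, forcing $g$ to move every point (a point and its image lie in different blocks) and hence to be fixed-point-free. If $G$ is imprimitive with two blocks $B_1,B_2$ of size $p$, then $\phi\colon G\to S_2$ has kernel $K$ of index $2$; as $p$ is odd every order-$p$ element lies in $K$, and $K$ acts transitively on each $B_i$. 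Embedding $P$ faithfully into $\langle\sigma_1\rangle\times\langle\sigma_2\rangle\cong(\mathbb{Z}/p)^2$, where $\sigma_i$ is a $p$-cycle on $B_i$, and using that $P$ surjects onto each factor, I would find an element projecting to a nontrivial cycle on \emph{both} blocks, which is the desired fixed-point-free element. Finally, if $G$ is primitive, Cauchy supplies an order-$p$ element, which is a single or a double $p$-cycle; a single $p$-cycle fixes $p\ge 3$ points, so by Jordan's theorem $G\supseteq A_{2p}$, and $A_{2p}$ contains a product of two $p$-cycles, while if no order-$p$ element is a single $p$-cycle then any such element is already fixed-point-free.

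I expect the imprimitive case with blocks of size $p$ and the invocation of Jordan's theorem in the primitive case to be the delicate points. The former needs the $(\mathbb{Z}/p)^2$-subgroup analysis to guarantee a cycle on both blocks simultaneously, since a diagonal subgroup could \emph{a priori} act trivially on one block; the latter needs Jordan's theorem in the precise form valid for a cycle of prime length fixing at least three points. Together with the clique--coclique reduction these yield $\rho(G)\le 2$ for all transitive $G$ of degree $2p$, and hence $I(2p)=2$. Part (1), by contrast, asks for $\rho(G)=1$ for every transitive group of degree $pq$; the clique--coclique shortcut is unavailable there, since a clique of size $pq$ is a sharply transitive set which need not exist, so one would be forced back onto the classification of transitive groups of degree $pq$, and this is the genuinely harder half of the conjecture.
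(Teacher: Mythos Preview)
Your argument is correct and shares the paper's global architecture: the lower bound from \cite[Lemma~5.3]{meagher180triangles}, the clique--coclique reduction, and the search for a fixed-point-free element of order~$p$. The genuine difference lies in the primitive case. The paper invokes Wielandt's theorem that a primitive group of degree $2p$ is either $2$-transitive or has every nontrivial Sylow-$p$ element equal to a product of two $p$-cycles, and then, in the $2$-transitive subcase, appeals to the Meagher--Spiga--Tiep EKR theorem to get $\rho(G)=1$ directly. Your use of Jordan's theorem (a primitive group containing a $p$-cycle fixing at least three points contains $\Alt(2p)$) is more elementary and more uniform: it actually produces a derangement of order~$p$, and hence a clique of size~$p$, in \emph{every} primitive case, so the deep $2$-transitive EKR result is never needed. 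In the imprimitive case the paper first fixes a single $p$-cycle $\sigma$, uses it to exclude blocks of size~$2$, and for two blocks of size~$p$ finds a second $p$-cycle $\sigma'$ supported on the complementary block via conjugacy of point-stabilizers, then takes $\sigma\sigma'$; your Sylow analysis inside $(\mathbb{Z}/p\mathbb{Z})^2$ reaches the same conclusion by a structural route and handles blocks of size~$2$ directly rather than under a running hypothesis. Both treatments are sound; yours is slightly more self-contained. Your closing remarks on part~(1) are also accurate: the clique--coclique shortcut does not obviously yield $\rho=1$ for degree $pq$, and the paper does not claim a proof of that half.
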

In this paper, we also prove that Conjecture~\ref{conj:main} \eqref{conj:4} holds.
\begin{thm}
	If  $p$ is an odd prime, then $I(2p) = 2$.\label{thm:main2}
\end{thm}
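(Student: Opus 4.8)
The plan is to prove the two inequalities $I(2p)\le 2$ and $I(2p)\ge 2$ separately. The lower bound is essentially already available: taking $\ell=p$ in the degree-$2\ell$ construction of \cite[Lemma~5.3]{meagher180triangles} produces a transitive group $G_0$ of degree $2p$ whose derangement graph is complete $p$-partite. Since $\Gamma_{G_0}$ is a Cayley graph it is vertex-transitive, so its $p$ parts all have the same size $|G_0|/p$; a maximum coclique is one such part, whence $\alpha(\Gamma_{G_0}) = |G_0|/p = 2\,|(G_0)_\omega|$ and $\rho(G_0)=2$. This gives $I(2p)\ge 2$.

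For the upper bound I would reduce the entire statement to a single clique estimate. Since $\Gamma_G$ is vertex-transitive, the clique--coclique bound yields $\alpha(\Gamma_G)\,\omega(\Gamma_G)\le |G|$; hence it suffices to exhibit a clique of size $p$ in $\Gamma_G$, for then $\alpha(\Gamma_G)\le |G|/p = 2|G_\omega|$ and therefore $\rho(G)\le 2$ for every transitive $G$ of degree $2p$. The cleanest source of such a clique is a fixed-point-free element $x\in G$ of order $p$: any nonidentity power $x^i$ (with $1\le i\le p-1$) is again a product of two $p$-cycles, hence a derangement, so $\langle x\rangle$ is a set of $p$ pairwise non-intersecting permutations, i.e.\ a $p$-clique in $\Gamma_G=\operatorname{Cay}(G,\Der(G))$. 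Thus the upper bound rests entirely on the following lemma: \emph{every transitive group of degree $2p$ contains a fixed-point-free element of order $p$.}

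To prove the lemma I would analyze a Sylow $p$-subgroup $P$ of $G$ through its orbits. Because $P$ is a nontrivial $p$-group and $p^2>2p$, every $P$-orbit has size $1$ or $p$, so $P$ has either two orbits of length $p$ (and no fixed points) or one orbit of length $p$ together with $p$ fixed points. In the first case $P$ embeds into $\mathbb{Z}_p\times\mathbb{Z}_p$ and projects onto each factor (each restriction is a transitive $p$-group of prime degree, hence cyclic of order $p$), so $P$ contains an element acting as a $p$-cycle on both orbits; this is the desired product of two $p$-cycles. The main obstacle is to rule out the second configuration, and I expect this to be the crux of the argument: there, faithfulness of the action on the single nontrivial orbit forces $P\cong\mathbb{Z}_p$, so $p\,\|\,|G|$; but a fixed point of $P$ means $P\le G_\omega$, giving $p\mid |G_\omega|$, which contradicts $p\,\|\,|G| = 2p\,|G_\omega|$ since $p$ is odd. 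Hence the second configuration never occurs, the lemma follows, and combining the two inequalities gives $I(2p)=2$.
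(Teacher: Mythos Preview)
Your proof is correct and takes a different, more elementary route than the paper's. Both arguments agree on the lower bound (via \cite[Lemma~5.3]{meagher180triangles}) and on reducing the upper bound, through the clique--coclique inequality (Lemma~\ref{lem:clique-coclique}), to finding a $p$-clique in $\Gamma_G$. The paper obtains this clique by starting with a single element $\sigma$ of order $p$ via Cauchy's theorem; when $\sigma$ happens to be a lone $p$-cycle it then splits into an imprimitive case (where the block structure of size $p$ is used to locate a second $p$-cycle $\sigma'$ on the complementary block, and $\sigma\sigma'$ is the desired derangement) and a primitive case (which invokes Wielandt's theorem on primitive groups of degree $2p$ together with the Meagher--Spiga--Tiep EKR theorem for $2$-transitive groups). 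Your Sylow-orbit argument sidesteps both the primitive/imprimitive dichotomy and these two external results: the index contradiction $p\parallel|G|$ versus $p\mid|G_\omega|$ rules out fixed points of $P$ uniformly, and the embedding $P\hookrightarrow\mathbb{Z}_p\times\mathbb{Z}_p$ with both projections surjective immediately yields an element nontrivial on each orbit. Your approach is therefore self-contained and in fact slightly stronger, since it always produces a fixed-point-free element of order $p$, even in the $2$-transitive case where the paper only concludes $\rho(G)=1$ without exhibiting such an element; the paper's version, in exchange, writes the derangement down explicitly as $\sigma\sigma'$ in the imprimitive situation.
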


This paper is organized as follows. In Section~\ref{sect:background}, we give some background results on complete multipartite derangement graphs and some properties of the intersection density of transitive groups. In Section~\ref{sect:proof1}, Section~\ref{sect:proof2}, and Section~\ref{sect:proof3}, we give the proof of Theorem~\ref{thm:main}, Theorem~\ref{thm:main-construction}, and Theorem~\ref{thm:main2}, respectively.


\section{Background}\label{sect:background}

Throughout this section, we let $G\leq \sym(\Omega)$ be a transitive group and  $|\Omega| = n$. 
\subsection{Bound on maximum cocliques}
 We recall that the problem of finding the size of the maximum intersecting families of $G$ is equivalent to finding the size of the maximum cocliques of 
$\Gamma_{G}$. We give a classical upper bound on the size of the largest cocliques in vertex-transitive 
graphs (i.e., graphs whose automorphism groups act transitively on their vertex sets). As the derangement graph of an arbitrary finite permutation group is a Cayley graph, the right-regular representation of $G$ acts regularly on $V(\Gamma_{G})$. In other words, $\Gamma_{G}$ is vertex transitive.

\begin{lem}[\cite{godsil2016erdos}]
	If $X = (V,E)$ is a vertex-transitive graph, then $\alpha(X) \leq \frac{|V(X)|}{\omega(X)}$. 
	Moreover, equality holds if and only if a maximum coclique of $X$ intersects each maximum clique at exactly one 
	vertex.\label{lem:clique-coclique}
\end{lem}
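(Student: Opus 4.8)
The plan is to prove the inequality $\alpha(X)\omega(X)\le |V(X)|$ by a double-counting argument over the automorphism group, exploiting the fact that vertex-transitivity provides a group $A=\Aut(X)$ acting transitively on $V:=V(X)$. Fix a maximum clique $C$ with $|C|=\omega(X)$ and a maximum coclique $S$ with $|S|=\alpha(X)$. The crucial elementary observation is that for every $g\in A$ the image $g(C)$ is again a clique, so $g(C)$ and $S$ share at most one vertex; that is, $|g(C)\cap S|\le 1$. Summing over $A$ then gives $\sum_{g\in A}|g(C)\cap S|\le |A|$.

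Next I would evaluate the same sum exactly by interchanging the order of summation:
\[
\sum_{g\in A}|g(C)\cap S| = \sum_{v\in S}\bigl|\{g\in A : v\in g(C)\}\bigr|.
\]
For a fixed $v$, since $g$ is a bijection, $v\in g(C)$ means $g(u)=v$ for a unique $u\in C$; by transitivity and the orbit-stabilizer theorem, the number of $g\in A$ with $g(u)=v$ equals $|A|/|V|$ for each $u\in C$, so the inner count is $\omega(X)\,|A|/|V|$. Hence the sum equals $\alpha(X)\,\omega(X)\,|A|/|V|$. Comparing with the upper bound $|A|$ and cancelling $|A|$ yields $\alpha(X)\omega(X)\le |V|$, which is the desired bound.

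For the equality characterization, I would trace exactly when the step $\sum_{g\in A}|g(C)\cap S|\le |A|$ is tight. Equality in the bound forces $|g(C)\cap S|=1$ for every $g\in A$. Running the identical computation for an arbitrary maximum clique $C'$ in place of $C$ (the exact evaluation of the sum used only that $|C'|=\omega(X)$), equality shows $|g(C')\cap S|=1$ for all $g\in A$, and taking $g$ to be the identity gives $|C'\cap S|=1$; thus the maximum coclique $S$ meets every maximum clique in exactly one vertex. Conversely, if a maximum coclique $S$ meets each maximum clique in exactly one vertex, then since every $g(C)$ is a maximum clique we get $|g(C)\cap S|=1$ for all $g\in A$, so the upper bound is attained and $\alpha(X)\omega(X)=|V|$.

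The argument is short, and the only genuinely delicate point is the exact count $\bigl|\{g\in A : v\in g(C)\}\bigr|=\omega(X)\,|A|/|V|$, which must be justified carefully from transitivity and orbit-stabilizer, and which relies on $g$ being a bijection so that no vertex of $g(C)$ is counted twice. The equality direction also requires the small but essential observation that the exact evaluation of the sum remains valid for \emph{any} maximum clique, not just the fixed $C$, so that one may conclude the intersection property against all maximum cliques rather than only against the orbit of $C$.
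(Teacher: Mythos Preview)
The paper does not give its own proof of this lemma; it is quoted from the cited monograph of Godsil and Meagher. Your double-counting argument over $\Aut(X)$ is correct and is precisely the standard proof found there: the two key ingredients are the trivial bound $|g(C)\cap S|\le 1$ and the orbit--stabilizer count $|\{g:v\in g(C)\}|=\omega(X)\,|A|/|V|$, and you handle both cleanly. Your treatment of the equality case is also correct, including the observation that the exact evaluation of the sum depends only on $|C'|=\omega(X)$, which lets you pass from the single clique $C$ to an arbitrary maximum clique $C'$.
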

Lemma~\ref{lem:clique-coclique} can be used to prove the EKR 
property of groups. For instance, one can prove that $\sym(n)$, for $n\geq 3$, has the EKR property 
\cite{cameron2003intersecting,Frankl1977maximum,godsil2009new} by showing 
first that $\omega(\Gamma_{\sym(n)}) = n$ (a clique of $\Gamma_{\sym(n)}$ is induced by a Latin square of size $n$) and applying Lemma~\ref{lem:clique-coclique}. A subset $S \subset G$ 
with $|S| = n$ that forms a clique in $\Gamma_{G}$ is called a \emph{sharply} $1$-\emph{transitive set}. 
It is well-known that a transitive group need not have a sharply $1$-transitive set. Therefore, 
Lemma~\ref{lem:clique-coclique} does not hold with equality for the derangement graphs of many transitive groups.

\subsection{Intersection density}
By \eqref{eq:intersection-density}, the intersection density of the transitive group $G$ is the rational number
\begin{align*}
	\rho(G) : = \frac{\max |\{\mathcal{F}\subseteq G\mid \mathcal{F} \mbox{ is intersecting}\}|}{|G_\omega|},
\end{align*} 
where $\omega\in \Omega$. 

The major result in \cite{meagher180triangles} (see also Theorem~\ref{lem:not-bipartite}) asserts that the intersection density of the transitive 
group $G$ cannot be equal to $\frac{n}{2}$. This is equivalent to saying that the derangement graph of transitive 
groups cannot be bipartite if $n\geq 3$ (see \cite{meagher180triangles}). 
It is also proved in \cite{meagher180triangles} that for any transitive group $K$ of degree $n$, 
$\rho(K)$ is in the interval $\left[ 1,\frac{n}{3}\right]$. We note that $\rho(K) = 1$ if and only if $K$ 
has the EKR property. Moreover, the upper bound $\frac{n}{3}$ is sharp since there are transitive groups whose derangement graphs are complete tri-partite graphs 
\cite[Theorem~5.1]{meagher180triangles}. It is conjectured that the only transitive groups that attain 
the upper bound are those with complete tri-partite derangements graphs.

The study of the intersection density (see \cite{li2020erd,meagher180triangles}) of a transitive group was mainly motivated by studying how far from having the EKR property a transitive group can be. The intersection density, therefore, is a measure of the EKR property for transitive groups.

We make the following conjecture based on computer search using \verb*|Sagemath| \cite{sagemath}.
\begin{conj}
	For any $n\geq 3$, almost all elements of the set $\mathcal{I}_n$ are integers. That is, $$\frac{\left|\left\{ \rho(G) \mid \ G \mbox{ is transitive of degree $n$} \right\}\cap \mathbb{N}\right|}{|\mathcal{I}_n|} \xrightarrow[n\to \infty]{} 1.$$
\end{conj}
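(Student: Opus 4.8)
The final statement is a \emph{conjecture}, so in place of a proof I sketch a plausible line of attack and isolate its genuinely open core. Write $I_n := \left|\mathcal{I}_n \cap \mathbb{N}\right|$ and $N_n := \left|\mathcal{I}_n \setminus \mathbb{N}\right|$, so that the asserted limit is equivalent to $N_n / I_n \to 0$. Since $\rho(G) \in \left[1, \tfrac{n}{3}\right]$ for every transitive $G$ of degree $n$, we have $I_n \leq \lfloor n/3 \rfloor + 1$; thus the conjecture forces the two simultaneous goals: (a) $I_n \to \infty$, ideally $I_n = \Theta(n)$, and (b) $N_n = o(I_n)$. The plan is to attack (a) by explicit construction and (b) by structural and spectral constraints on which rationals can occur as densities.

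For (a) I would first record the elementary fact that if $G$ contains a regular subgroup $R$, then the identity together with the $n-1$ nonidentity elements of $R$ form a clique of size $n$ in $\Gamma_G$ (any two differ by a nonidentity element of $R$, which is a derangement), so $\omega(\Gamma_G) = n$ and Lemma~\ref{lem:clique-coclique} gives $\rho(G) = 1$. In particular the regular representation of $C_n$ shows $1 \in \mathcal{I}_n$ for all $n$. To populate $\mathcal{I}_n$ with many further integers I would use the complete multipartite constructions of Theorem~\ref{thm:main} and Theorem~\ref{thm:main-construction}: there $\Gamma_G$ is complete $m$-partite and, being vertex-transitive, has all parts of equal size, so $\rho(G) = n/m$. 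Combining these with imprimitive blow-up constructions (replacing each point of a degree-$m$ group of density $d$ by a block on which a regular group acts) and with product actions, the aim is to realise a positive proportion of the integers in $\left[1, \tfrac{n}{3}\right]$ and thereby establish $I_n = \Theta(n)$.

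The crux is (b). The first reduction is that any $G$ with a regular subgroup contributes only the integer $1$, so every non-integer density comes from a group with no sharply $1$-transitive set, for which $\omega(\Gamma_G) < n$; Lemma~\ref{lem:clique-coclique} then bounds $\rho(G) \leq n/\omega(\Gamma_G)$, and since $\rho(G) = \alpha(\Gamma_G)/|G_\omega|$ its reduced denominator must divide $|G_\omega| = |G|/n$. The hard step is to show that only $o(n)$ \emph{distinct} non-integer rationals actually occur. I would try to bound the reduced denominator of $\rho(G)$ by a slowly growing function of $n$ using the ratio (Hoffman) bound on the least eigenvalue of $\Gamma_G$, whose spectrum is character-theoretic and in many cases forces the maximum cocliques to be unions of point-stabiliser cosets. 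One would then have to prove, for each admissible denominator $b \geq 2$, that the achievable numerators are sparse rather than filling a positive fraction of $\left[b, \tfrac{bn}{3}\right]$.

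The main obstacle is precisely this sparsity of non-integer values. At present there is no classification of the rationals arising as intersection densities, and bounding denominators alone only yields $N_n = O(n)$, matching the order of $I_n$ and leaving the ratio bounded away from $0$ rather than tending to it. Pushing $N_n$ down to $o(n)$ seems to require genuinely new input: either a structural theorem forcing maximum cocliques of $\Gamma_G$ to be stabiliser-coset unions outside a vanishing family of groups, or a spectral argument showing that the non-integral Hoffman bounds are almost never attained. I expect this to be the decisive difficulty, and it is exactly why the statement is posed as a conjecture rather than a theorem.
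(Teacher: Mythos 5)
You were asked to prove a statement that the paper itself poses only as a conjecture, supported by nothing more than computer search in \verb|Sagemath| (the paper's accompanying evidence is the pair of degree-$12$ groups with intersection densities $\frac{3}{2}$ and $\frac{17}{16}$, which merely show that $\mathcal{I}_n \not\subseteq \mathbb{N}$ in general). There is therefore no proof in the paper to compare you against, and your decision to present a research programme while explicitly isolating the open core, rather than claiming a proof, is the correct call. The checkable ingredients of your sketch are sound: a regular subgroup (or, as you later correctly generalize, any sharply $1$-transitive set) yields $\omega(\Gamma_G) = n$, whence $\alpha(\Gamma_G) \leq |G|/n = |G_\omega|$ by Lemma~\ref{lem:clique-coclique} and $\rho(G) = 1$; for the complete multipartite families of Theorem~\ref{thm:main} and Theorem~\ref{thm:main-construction}, the part containing $id$ is $\Fix(G)$, so $\rho(G) = n/m$ with $m = [G:\Fix(G)]$, giving the values $q$ and $2$ respectively, consistent with the paper; and the reduced denominator of $\rho(G) = \alpha(\Gamma_G)/|G_\omega|$ indeed divides $|G_\omega|$. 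Your elementary first fact already has nontrivial content: since a transitive group of prime degree $p$ contains a $p$-cycle by Cauchy's theorem, hence a regular cyclic subgroup, one gets $\mathcal{I}_p = \{1\}$, settling the conjecture trivially along prime degrees.

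Two honest caveats on the programme itself. First, your goal (a) is also open, not merely your goal (b): at a \emph{fixed} degree $n$, the paper's constructions produce only isolated integer values ($1$ from regular subgroups, $2$ at degrees $2\ell$ and $4\ell$, $q$ at degree $q(q+1)$), and no known machinery realises a positive proportion of the integers in $\left[1, \frac{n}{3}\right]$ at a single degree; in fact the paper records that even attainment of the extreme value $\frac{n}{3}$ is only conjecturally tied to complete tripartite derangement graphs. Your proposed blow-up step also needs justification that is nowhere available in the paper: the intersection density of an imprimitive extension is not known to be a simple (e.g., multiplicative) function of the densities of the block action and the top action, so ``replacing each point by a block with a regular group'' does not automatically transport a density $d$ from degree $m$ to degree $km$. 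Second, your diagnosis of (b) as the decisive obstruction is accurate and matches the state of the art: bounded denominators give at best $N_n = O(n)$, the same order as the trivial upper bound $I_n \leq \lfloor n/3\rfloor + 1$, so the ratio argument stalls exactly where you say it does. In short, there is no false step in your proposal, but like the paper you leave the statement unproved; what you have written is a reasonable roadmap, with the caveat that its part (a) should be flagged as open with the same prominence as its part (b).
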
 

Note that the intersection density of a transitive group can be non-integer. For example, the transitive groups of degree $n$ and number $k$ in the \verb|TransitiveGroup| function of \verb|Sagemath|, with $(n,k) \in \{(12,122),(12,93)\}$, have non-integer intersection density. \verb|TransitiveGroup|(12,122) and \verb|TransitiveGroup|(12,93) have intersection density  equal to $\frac{3}{2}$ and $\frac{17}{16}$, respectively.

\begin{prop}
	If the derangement $\Gamma_{G}$ has a clique of size $k$, then $\rho(G) \leq \frac{n}{k}$.
\end{prop}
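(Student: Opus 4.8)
The plan is to combine two ingredients already available in the excerpt: the orbit--stabilizer count of $|G_\omega|$ and the clique--coclique bound of Lemma~\ref{lem:clique-coclique}, after re-expressing $\rho(G)$ as a ratio involving the independence number $\alpha(\Gamma_G)$.

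First I would rewrite the numerator of $\rho(G)$. Since a family $\mathcal{F}\subseteq G$ is intersecting if and only if it is a coclique of $\Gamma_G$, the maximum size of an intersecting family equals $\alpha(\Gamma_G)$. Moreover, as $G$ is transitive, the orbit--stabilizer theorem gives $|G_\omega| = |G|/n$ for every $\omega\in\Omega$. Hence
\begin{align*}
	\rho(G) = \frac{\alpha(\Gamma_G)}{|G|/n} = \frac{n\,\alpha(\Gamma_G)}{|G|}.
\end{align*}

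Next I would invoke the clique--coclique bound. The graph $\Gamma_G = \operatorname{Cay}(G,\Der(G))$ is vertex-transitive, so Lemma~\ref{lem:clique-coclique} applies with $|V(\Gamma_G)| = |G|$. A clique of size $k$ shows that $\omega(\Gamma_G)\geq k$, and since $x\mapsto |G|/x$ is decreasing we obtain
\begin{align*}
	\alpha(\Gamma_G) \leq \frac{|G|}{\omega(\Gamma_G)} \leq \frac{|G|}{k}.
\end{align*}
Substituting this into the previous display yields $\rho(G)\leq \frac{n}{k}$, as claimed.

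I do not expect any genuine obstacle: the statement is an immediate consequence of Lemma~\ref{lem:clique-coclique} once $\rho(G)$ is written through $\alpha(\Gamma_G)$. The only points that require a little care are the bookkeeping identity $|G_\omega| = |G|/n$ (which holds precisely because $G$ is transitive) and the final monotonicity step, namely that a clique of size $k$ only gives a lower bound on the clique number, so the clique--coclique bound is first applied with $\omega(\Gamma_G)$ and then weakened to $k$.
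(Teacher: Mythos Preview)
Your proposal is correct and follows exactly the paper's approach: the paper's proof consists of the single line ``The proof follows by applying Lemma~\ref{lem:clique-coclique},'' and your argument simply unpacks that application, using the identification of intersecting families with cocliques and the orbit--stabilizer identity $|G_\omega|=|G|/n$.
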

\begin{proof}
	The proof follows by applying Lemma~\ref{lem:clique-coclique}.
\end{proof}

\subsection{Complete multipartite derangement graphs}
The transitive groups with complete multipartite derangement graphs are the most natural examples 
of groups that do not have the EKR property. In this subsection, we give some properties of transitive groups whose derangement graphs are complete multipartite.

The following lemma is a straightforward observation on the intersecting subgroups of $G$.
\begin{lem}[\cite{li2020erd,meagher180triangles}]
	Let $G\leq \sym(\Omega)$ and let $H\leq G$. Then, $H$ is intersecting if and only  if $H$ does not 
	have  any derangement.\label{lem:intersecting-subgroups}
\end{lem}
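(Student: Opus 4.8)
The plan is to translate the combinatorial notion of being \emph{intersecting} into the purely group-theoretic language of fixed points, exploiting the hypothesis that $H$ is a subgroup (and not merely a subset) of $G$. The single observation that drives everything is the following reformulation: two permutations $\sigma,\pi\in G$ are intersecting if and only if the element $\sigma\pi^{-1}$ has a fixed point, i.e.\ $\sigma\pi^{-1}\notin\Der(G)$. Indeed, using the right-action convention $\omega^{\sigma\pi}=(\omega^\sigma)^\pi$, the condition $\omega^\sigma=\omega^\pi$ is equivalent, after applying $\pi^{-1}$ to both sides, to $\omega^{\sigma\pi^{-1}}=\omega$; thus an $\omega\in\Omega$ witnessing that $\sigma$ and $\pi$ intersect is exactly a fixed point of $\sigma\pi^{-1}$.

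For the forward direction, I would assume $H$ is intersecting and deduce that $H$ contains no derangement. Since $H$ is a subgroup, $\mathrm{id}\in H$. Fixing any $\sigma\in H$, the elements $\sigma$ and $\mathrm{id}$ both lie in the intersecting family $H$, so they must intersect; hence there is some $\omega\in\Omega$ with $\omega^\sigma=\omega^{\mathrm{id}}=\omega$. Therefore every $\sigma\in H$ fixes a point, so $\sigma\notin\Der(G)$, and $H$ has no derangement.

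For the reverse direction, I would assume $H$ has no derangement and show $H$ is intersecting. Given arbitrary $\sigma,\pi\in H$, closure of the subgroup $H$ under products and inverses gives $\sigma\pi^{-1}\in H$. By hypothesis $\sigma\pi^{-1}\notin\Der(G)$, so it fixes some $\omega\in\Omega$; by the reformulation above this yields $\omega^\sigma=\omega^\pi$, i.e.\ $\sigma$ and $\pi$ intersect. As $\sigma,\pi$ were arbitrary, $H$ is intersecting.

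I do not expect any genuine obstacle here, as the lemma is elementary. The only point requiring care is that both implications rest entirely on $H$ being closed under the group operations: the identity element supplies the fixed point in the first direction, and closure under $\sigma\pi^{-1}$ supplies it in the second. For a general subset (rather than a subgroup) neither implication need hold, which is precisely why the statement is phrased for subgroups.
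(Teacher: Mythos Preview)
Your proof is correct. Note that the paper itself does not supply a proof of this lemma; it merely quotes the statement from \cite{li2020erd,meagher180triangles}, so there is no in-paper argument to compare against. Your argument is the standard one and would be exactly what one expects to find in those references: the reformulation ``$\sigma$ and $\pi$ intersect iff $\sigma\pi^{-1}\notin\Der(G)$'' together with the presence of $\mathrm{id}$ in $H$ and closure of $H$ under $(\sigma,\pi)\mapsto\sigma\pi^{-1}$ is all that is needed.
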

The next lemma illustrates that transitive groups with complete multipartite derangement graphs have 
a very distinct algebraic structure.
\begin{lem}[\cite{meagher180triangles}]
	If $G\leq \sym(\Omega)$ is transitive such that $\Gamma_{G}$ is a complete multipartite graph, then 
	$G$ is imprimitive.\label{lem:imprimitive}
\end{lem}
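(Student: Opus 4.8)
The plan is to show that the part of $\Gamma_G$ containing the identity is a proper subgroup of $G$ that strictly contains a point-stabilizer; then point-stabilizers are not maximal, and $G$ is imprimitive. Write $H := \{g \in G : \fix(g) \neq \varnothing\}$ for the set of elements with at least one fixed point. As noted before the statement, $H$ is exactly the part of $\Gamma_G$ containing $id$, and $H = \bigcup_{\omega \in \Omega} G_\omega$ is the union of all point-stabilizers.

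First I would translate the hypothesis into an algebraic statement about $H$. A graph is complete multipartite precisely when non-adjacency together with equality is an equivalence relation on its vertices, equivalently when its complement is a disjoint union of cliques. For $\Gamma_G = \operatorname{Cay}(G,\Der(G))$, two vertices $g,h$ are non-adjacent iff they intersect, i.e. $\omega^g = \omega^h$ for some $\omega$, which is the same as $gh^{-1} \in H$. Hence the non-adjacency-or-equality relation is ``$gh^{-1} \in H$''; it is automatically reflexive ($id \in H$) and symmetric ($H$ is closed under inverses, since $g$ and $g^{-1}$ have the same fixed points), and it is transitive exactly when $H$ is closed under products. Thus $\Gamma_G$ is complete multipartite iff $H$ is a subgroup of $G$. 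This is the key step, and it is where the complete multipartite hypothesis is genuinely used.

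Next I would locate $H$ between a point-stabilizer and $G$. Since $|\Omega| \geq 2$, the group $G$ has a derangement (for instance by Jordan's theorem), so $\Der(G) = G \setminus H \neq \varnothing$ and $H$ is a \emph{proper} subgroup. On the other hand $G_\omega \leq H$ for every $\omega$. It remains to rule out $G_\omega = H$: if this held, every point-stabilizer would lie in $G_\omega$ and hence equal it, forcing $G_\omega \trianglelefteq G$; by faithfulness $\bigcap_{\alpha \in \Omega} G_\alpha = 1$, so $G_\omega = 1$ and $G$ would be regular, whence $H = \{id\}$ and $\Gamma_G = K_{|G|}$. Since $\Gamma_G$ is genuinely complete multipartite, i.e. has a part of size at least two (recall that, being vertex-transitive, $\Gamma_G$ has all parts of equal size, so this is exactly the case $H \neq \{id\}$), this degenerate situation is excluded and we obtain the strict chain $G_\omega \lneq H \lneq G$.

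Finally, a transitive group is primitive iff its point-stabilizers are maximal subgroups; equivalently, $B := \omega^H$ is a block of imprimitivity, and the chain $G_\omega \lneq H \lneq G$ forces $B$ to be nontrivial (neither a singleton nor all of $\Omega$). Either way $G$ is imprimitive. The main obstacle is the identification of $H$ as a subgroup in the second step; once that structural fact is secured, the conclusion is routine group-action theory, the only care needed being to separate off the regular (complete-graph) degenerate case.
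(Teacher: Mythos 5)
Your proof is correct, and it reaches the conclusion by a genuinely different route from the paper. The paper does not prove Lemma~\ref{lem:imprimitive} from first principles: the lemma is imported from \cite{meagher180triangles}, and the only justification offered is the remark following the proposition that $\Fix(G)$ is normal --- namely, that once $\Gamma_G$ is complete multipartite, $\Fix(G)$ is intersecting (Lemma~\ref{lem:complete-multipartite}, also imported), hence equals $\bigcup_{\omega\in\Omega}G_\omega$, and the orbits of this normal subgroup form a non-trivial system of imprimitivity. You do two things differently. First, you prove the key structural fact yourself rather than citing it: your observation that a graph is complete multipartite exactly when non-adjacency-or-equality is an equivalence relation, applied to $\operatorname{Cay}(G,\Der(G))$, shows that $\Gamma_G$ is complete multipartite if and only if $H=\bigcup_{\omega\in\Omega}G_\omega$ is a subgroup; this is in substance a proof of Lemma~\ref{lem:complete-multipartite}. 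Second, you finish via the strict chain $G_\omega\lneq H\lneq G$ and the stabilizer--block correspondence (non-maximality of $G_\omega$, equivalently the non-trivial block $\omega^H$), whereas the paper finishes via normality of $\Fix(G)$ together with the fact that orbits of a normal subgroup are blocks. Your version is more self-contained and elementary; the paper's version buys more for its later purposes, since exhibiting the parts as cosets of the normal subgroup $\Fix(G)$ simultaneously identifies the block system with the orbits of $\Fix(G)$ and pins down the kernel of the action on that system (the corollary after Lemma~\ref{lem:setwise-stabilizer}), which is the machinery used in the proofs of the main theorems. Finally, both arguments must exclude the degenerate case $H=\{id\}$: for a regular group, $\Gamma_G=K_{|G|}$ is complete multipartite in the inclusive sense, yet such a group can be primitive (e.g.\ a cyclic group of prime order acting regularly), so the lemma is only correct under the convention that some part has size at least two. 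The paper adopts exactly this convention through its notion of non-trivial complete multipartite graphs, and your explicit handling of this case --- noting that vertex-transitivity forces all parts to have equal size, so the hypothesis amounts to $H\neq\{id\}$ --- is a necessary step, not a pedantic one.
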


A transitive group whose derangement graph is a complete multipartite graph is uniquely determined by 
a particular subgroup of $G$. We define $\Fix(G)$ to be the subgroup of $G$ generated by all the permutations of $G$ with at 
least one fixed point. That is,
\begin{align*}
	\Fix(G) := \left\langle  \bigcup_{\omega\in \Omega } G_\omega  \right\rangle.
\end{align*}

\begin{prop}
	The subgroup $\Fix(G)$ is a normal subgroup of $G$.
\end{prop}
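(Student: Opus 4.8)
The plan is to exploit the fact that conjugation by any $g\in G$ is an automorphism of $G$ that permutes the point-stabilizers, and hence fixes setwise the generating set $\bigcup_{\omega\in\Omega}G_\omega$ of $\Fix(G)$.

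First I would record the standard identity relating a conjugate of a point-stabilizer to the stabilizer of a moved point. Working with the right-action convention $\omega^\sigma$ used throughout the paper, take $\sigma\in G_\omega$ and $g\in G$; a one-line computation gives $(\omega^g)^{g^{-1}\sigma g}=\omega^{\sigma g}=(\omega^\sigma)^g=\omega^g$, so $g^{-1}\sigma g$ fixes $\omega^g$. This shows $g^{-1}G_\omega g\subseteq G_{\omega^g}$, and the reverse inclusion (apply the same argument to $g^{-1}$, or compare orders, since all stabilizers have size $|G|/n$ by transitivity) upgrades this to the equality $g^{-1}G_\omega g=G_{\omega^g}$.

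Next I would pass from a single stabilizer to the whole union. Because $g$ permutes $\Omega$, as $\omega$ runs over $\Omega$ the point $\omega^g$ also runs over all of $\Omega$; therefore
\[
	g^{-1}\Bigl(\bigcup_{\omega\in\Omega}G_\omega\Bigr)g=\bigcup_{\omega\in\Omega}G_{\omega^g}=\bigcup_{\omega\in\Omega}G_\omega .
\]
Finally, since the inner automorphism $x\mapsto g^{-1}xg$ sends the subgroup generated by a set $S$ to the subgroup generated by the image of $S$, applying it with $S=\bigcup_{\omega\in\Omega}G_\omega$ yields $g^{-1}\Fix(G)g=\Fix(G)$. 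As $g\in G$ was arbitrary, $\Fix(G)\trianglelefteq G$.

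There is no genuine obstacle here: the content is entirely the identity $g^{-1}G_\omega g=G_{\omega^g}$ together with the observation that conjugation commutes with the operation of taking the subgroup generated by a set. The only point requiring care is to keep the action convention consistent so that the conjugation identity comes out in the correct direction.
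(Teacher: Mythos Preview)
Your argument is correct and is precisely the intended one: the paper's own proof is the single sentence ``The proof follows from the fact that $\Fix(G)$ is generated by all point-stabilizers,'' and what you wrote is exactly the standard unpacking of that sentence via $g^{-1}G_\omega g=G_{\omega^g}$.
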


\begin{proof}
	The proof follows from the fact that $\Fix(G)$ is generated by all point-stabilizers.
\end{proof}
Note that Lemma~\ref{lem:imprimitive} follows from the normality of $\Fix(G)$ as its orbits form a non-trivial system of imprimitivity of $G$ acting on $\Omega$. 

A characterization of transitive groups with complete multipartite derangement graphs is given in the next lemma.

\begin{lem}[\cite{meagher180triangles}]
	Let $G\leq \sym(\Omega)$ be transitive. The graph $\Gamma_{G}$ is complete 
	multipartite if and only if $\Fix(G)$ is intersecting. Moreover, if $\Gamma_{G}$ is a complete multipartite 
	graph, then the number of parts of $\Gamma_{G}$ is $[G:\Fix(G)]$.\label{lem:complete-multipartite}
\end{lem}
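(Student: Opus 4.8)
The plan is to reformulate the complete-multipartite property as a statement about an equivalence relation on $G$, and then translate that into the subgroup condition on $\Fix(G)$. First I would recall the elementary graph-theoretic fact that a graph is complete multipartite if and only if its non-adjacency relation, made reflexive, is an equivalence relation; in that case the parts of the graph are exactly the equivalence classes. So the entire statement reduces to understanding when non-adjacency in $\Gamma_{G}$ is transitive.

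Let $F := \bigcup_{\omega \in \Omega} G_\omega$ denote the set of all elements of $G$ with at least one fixed point (equivalently, $F = G \setminus \Der(G)$). By the definition of $\Gamma_{G}$, two vertices $\sigma,\pi$ are non-adjacent precisely when $\sigma\pi^{-1}$ has a fixed point, i.e. $\sigma\pi^{-1} \in F$. Writing $R$ for this relation together with reflexivity, I would first check that $R$ is reflexive (since $id \in F$) and symmetric (since $F$ is closed under inverses, as fixing a point is preserved under inverting). All the content then lies in the transitivity of $R$.

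Next I would establish the key algebraic equivalence: $R$ is transitive if and only if $F$ is closed under products. Because $\sigma\tau^{-1} = (\sigma\pi^{-1})(\pi\tau^{-1})$, transitivity of $R$ says exactly that $a,b \in F$ forces $ab \in F$; conversely, taking $\tau = id$, $\pi = b$, and $\sigma = ab$ recovers closure from a transitivity instance. Since $F$ already contains $id$ and is inverse-closed, closure under products is equivalent to $F$ being a subgroup, which is equivalent to $F = \langle F\rangle = \Fix(G)$. Finally, $F = \Fix(G)$ is equivalent to $\Fix(G)$ containing no derangement: every element of $F$ has a fixed point, and conversely if $\Fix(G)$ has no derangement then $\Fix(G) \subseteq F \subseteq \Fix(G)$. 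By Lemma~\ref{lem:intersecting-subgroups}, this last condition is exactly that $\Fix(G)$ is intersecting, so the chain of equivalences yields the first assertion.

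For the count of parts, once $\Gamma_{G}$ is complete multipartite I would use that $F = \Fix(G)$ is a subgroup and that $\sigma R \pi \iff \sigma\pi^{-1} \in \Fix(G)$, so the equivalence classes of $R$ are precisely the right cosets $\Fix(G)\sigma$; hence the number of parts equals $[G:\Fix(G)]$. The only mildly delicate point, and the main obstacle, is the bookkeeping in the transitivity-versus-closure step: one must choose the three group elements witnessing a transitivity instance so that the hypotheses become ``$a \in F$'' and ``$b \in F$'' while the conclusion becomes ``$ab \in F$'', and then verify both directions carefully. Everything else is routine once the reduction to an equivalence relation is in place.
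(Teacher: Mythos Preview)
Your argument is correct and complete. The reduction to the equivalence-relation characterization of complete multipartite graphs, followed by the chain
\[
R\text{ transitive} \iff F\text{ closed under products} \iff F=\Fix(G) \iff \Fix(G)\text{ derangement-free} \iff \Fix(G)\text{ intersecting},
\]
is exactly the right skeleton, and your bookkeeping in the transitivity-versus-closure step is fine (the choice $\tau=id$, $\pi=b$, $\sigma=ab$ does the job).

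However, there is nothing to compare against: the paper does not supply its own proof of this lemma. It is quoted as a background result from \cite{meagher180triangles} and stated without proof. So while your write-up is a valid self-contained proof, the paper itself simply cites the statement. If anything, your proof is what one would expect the cited reference to contain, phrased via the non-adjacency equivalence relation; the paper's surrounding discussion (that the part containing $id$ must equal $\Fix(G)$, and that $\Fix(G)$ being intersecting forces $\Fix(G)=\bigcup_\omega G_\omega$) is consistent with your approach but is commentary rather than a proof.
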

Suppose that $\Gamma_{G}$ is a complete multipartite graph. When the subgroup $\Fix(G)$ is the trivial group $\{id\}$, then $\Gamma_{G}$ is the complete multipartite 
graph that has $|G|$ parts of size $1$. In other words, $\Gamma_{G}$ is the complete graph $K_{|G|}$. 
When $\Fix(G) = G$, then $\Fix(G)$ cannot be intersecting since by 
Lemma~\ref{lem:intersecting-subgroups}, this would contradict the celebrated 
theorem of Jordan \cite{jordan1872recherches,serre2003theorem} on the existence of derangements in finite transitive groups. Hence, we say that $\Gamma_{G}$ is 
a \emph{ non-trivial complete multipartite graph} if $1<|\Fix(G)|<|G|$. In this paper, we are only interested in transitive groups with non-trivial complete multiplartite derangement graphs.

Next, we study the nature of $\Fix(G)$. If $\Fix(G)$ is intersecting, then by Lemma~\ref{lem:intersecting-subgroups}, $\Fix(G)$ is derangement-free. Thus, 
\begin{align*}
	\Fix(G) = \bigcup_{\omega \in \Omega} G_\omega.
\end{align*}
Recall that if $K\leq \sym(\Omega)$ and $\omega \in \Omega$, then the orbit of $K$ containing $\omega$ is denoted by $\omega^K$. Moreover, if $S \subset \Omega$, then the setwise stabilizer of $S$ in $K$ is denoted by $K_{\{S\}}$.

The following lemma is a standard result in the theory of permutation groups.
\begin{lem}
	Let $G\leq \sym(\Omega)$ and $\omega\in \Omega$. If $H$ is a {non-trivial} subgroup of $G$ 
	containing $G_\omega$, then $G_{\{\omega^H\}} = 
	H$.\label{lem:setwise-stabilizer}
\end{lem}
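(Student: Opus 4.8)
The plan is to prove the claimed equality by establishing the two inclusions $H\subseteq G_{\{\omega^H\}}$ and $G_{\{\omega^H\}}\subseteq H$ separately. The first inclusion is immediate from the definition of an orbit: since $\omega^H=\{\omega^h : h\in H\}$ is an $H$-orbit, every element of $H$ permutes the points of $\omega^H$ among themselves, so $(\omega^H)^h=\omega^H$ for all $h\in H$. As $H\leq G$, this shows that each $h\in H$ lies in the setwise stabilizer $G_{\{\omega^H\}}$, giving $H\subseteq G_{\{\omega^H\}}$.

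For the reverse inclusion, I would take an arbitrary $g\in G_{\{\omega^H\}}$ and show $g\in H$ by a coset argument. Since $\omega\in\omega^H$ and $g$ fixes the set $\omega^H$, we have $\omega^g\in(\omega^H)^g=\omega^H$, so there is some $h\in H$ with $\omega^g=\omega^h$. Then $gh^{-1}$ fixes the point $\omega$, i.e.\ $gh^{-1}\in G_\omega$. This is exactly the point at which the hypothesis $G_\omega\leq H$ enters: it forces $gh^{-1}\in H$, and hence $g=(gh^{-1})h\in H$. Combining the two inclusions yields $G_{\{\omega^H\}}=H$.

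There is no serious obstacle here; the result is a routine orbit-stabilizer type computation, and in fact neither the transitivity of $G$ nor the non-triviality of $H$ is genuinely used in the argument. The only point that requires care is recognizing that the containment $G_\omega\leq H$ is precisely what converts the statement ``$g$ moves $\omega$ within its $H$-orbit'' into the conclusion ``$g\in H$''; without this hypothesis the setwise stabilizer $G_{\{\omega^H\}}$ could be strictly larger than $H$.
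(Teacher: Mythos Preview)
Your proof is correct and is exactly the standard argument for this fact. The paper does not actually supply a proof of this lemma; it simply introduces it as ``a standard result in the theory of permutation groups,'' so there is nothing to compare against beyond noting that what you wrote is precisely the routine orbit--stabilizer computation one would expect, and your observation that the non-triviality hypothesis on $H$ is superfluous is also correct.
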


\begin{cor}
	Let $G\leq \sym(\Omega)$ be transitive and let $K$ be the subgroup of $G$ fixing the system of imprimitivity $\left\{ 
	\omega^{\Fix(G)} \mid \omega\in \Omega \right\}$. Then $K = \Fix(G)$.
\end{cor}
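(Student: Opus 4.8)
The plan is to prove the two inclusions $\Fix(G)\subseteq K$ and $K\subseteq \Fix(G)$ separately, with the reverse inclusion being the substantive one and resting entirely on Lemma~\ref{lem:setwise-stabilizer}. Before anything else, I would pin down the intended reading of ``fixing the system of imprimitivity'': since $G$ is transitive and the blocks $\left\{\omega^{\Fix(G)}\mid \omega\in\Omega\right\}$ form a system of imprimitivity, every element of $G$ permutes these blocks among themselves, so the stabilizer of the whole partition would be all of $G$. Hence $K$ must denote the subgroup fixing each block \emph{setwise}, equivalently the kernel of the induced action of $G$ on the set of blocks, i.e. $K=\bigcap_{\omega\in\Omega}G_{\{\omega^{\Fix(G)}\}}$.

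For the easy inclusion, I would observe that the blocks are by definition the orbits of $\Fix(G)$ on $\Omega$, and any group maps each of its own orbits to itself setwise. Therefore every element of $\Fix(G)$ fixes every block setwise, so $\Fix(G)$ lies in the kernel $K$ of the action on blocks, giving $\Fix(G)\subseteq K$. (Alternatively this also follows from $\Fix(G)\trianglelefteq G$, as recorded in the preceding proposition.)

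For the reverse inclusion, I would fix any $\omega\in\Omega$ and set $B:=\omega^{\Fix(G)}$, the block containing $\omega$. Since $K$ fixes every block setwise, in particular $K\leq G_{\{B\}}$. The key step is then to identify $G_{\{B\}}$ with $\Fix(G)$: because $G_\omega\subseteq \Fix(G)$ by the very definition of $\Fix(G)$, I can invoke Lemma~\ref{lem:setwise-stabilizer} with the subgroup $H=\Fix(G)$, which contains $G_\omega$ and is non-trivial in the non-trivial complete multipartite setting of interest, to conclude $G_{\{\omega^{\Fix(G)}\}}=\Fix(G)$. Combining, $K\leq G_{\{B\}}=\Fix(G)$, and together with the first inclusion this yields $K=\Fix(G)$.

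I do not expect a genuine obstacle here, as the argument is a direct application of Lemma~\ref{lem:setwise-stabilizer}. The only points requiring care are verifying its hypotheses, namely that $\Fix(G)$ is non-trivial and contains the point-stabilizer $G_\omega$, and disposing of the degenerate case $\Fix(G)=\{id\}$, in which the blocks are singletons and $K=\{id\}=\Fix(G)$ holds trivially.
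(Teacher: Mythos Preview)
Your proposal is correct and follows essentially the same approach as the paper: the paper's proof is the one-line computation
\[
K=\bigcap_{\omega\in\Omega}G_{\{\omega^{\Fix(G)}\}}=\bigcap_{\omega\in\Omega}\Fix(G)=\Fix(G),
\]
where each equality $G_{\{\omega^{\Fix(G)}\}}=\Fix(G)$ comes from Lemma~\ref{lem:setwise-stabilizer} applied with $H=\Fix(G)\supseteq G_\omega$, which is exactly your key step. Your version is simply more explicit about the two inclusions and, helpfully, treats the degenerate case $\Fix(G)=\{id\}$ (needed for the non-triviality hypothesis of the lemma) that the paper glosses over.
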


\begin{proof}
	Since $\Fix(G)$ is generated by the point-stabilizers, by the previous lemma, we have
	\begin{align*}
	K &= \bigcap_{\omega \in \Omega} G_{\{ \omega^{\Fix(G)} \}} = 
	\bigcap_{\omega \in \Omega} \Fix(G) = \Fix(G).
	\end{align*}
\end{proof}

\begin{rmk}
	A representation of the derangement graph of the transitive group $G$ as a complete mutlipartite graph is unique. This is due to the fact that the part of $\Gamma_{G}$, which contains the identity element, must be equal to $F(G)$.
\end{rmk}

%

\section{Proof of Theorem~\ref{thm:main}}\label{sect:proof1}

In this section, we describe the action of $\agl(2,q)$ on the lines and give some basic results. Then, we prove Theorem~\ref{thm:main}.
\subsection{An action of $\agl(2,q)$ on the lines}

Let $q = p^k$ be a prime power, where $k\geq 1$. For $b\in \mathbb{F}_q^2$ and $A\in \gl(2,q)$, we let $(b,A): 
\mathbb{F}_q^2 \to \mathbb{F}_q^2$ be the affine transformation such that $(b,A)(v) := Av+b$. The \emph{affine group} 
$\agl(2,q)$ is the permutation group 
\begin{align*}
	\left\{(b,A) \mid A\in \gl(2,q),\ b\in \mathbb{F}_q^2  \right\},
\end{align*}
with the multiplication $(a,A)(b,B) := (a + Ab,AB)$.

Hence, $\agl(2,q)$ acts naturally on the vectors of $\mathbb{F}_q^2$. This action induces an action of $\agl(2,q)$ on the set $\Omega$ of all lines of $\mathbb{F}_q^2$ (i.e., the collection of all sets of the form $L_{u,v} := \left\{ u+tv \mid t \in \mathbb{F}_q \right\}$, where $u,v\in \mathbb{F}_q^2$ and $v\neq 0$). 
Recall that $\pg(1,\mathbb{F}_q) := \pg(1,q)$ is the set of all $1$-dimensional subspaces of the $\mathbb{F}_q$-vector space $\mathbb{F}_q^2$. The elements of $\pg(1,q)$ are exactly the lines containing $0\in \mathbb{F}^2_q$. By a simple counting argument, each vector of $\mathbb{F}_q^2 \setminus \{0\}$ determines a line, and each line passing through $0$ has $q-1$ points (excluding $0$). So there are $\frac{q^2-1}{q-1} = q+1$ subspaces in $\pg(1,q)$. 
For any line $\ell$ $\in \pg(1,q)$, we define $\Omega_\ell := \left\{ \ell + b \mid b\in \mathbb{F}_q^2 \right\}$. The set $\Omega_\ell$ consists of $\mathbb{F}_q^2$-shifts of the $1$-dimensional subspace $\ell$, thus its elements are affine lines of $\mathbb{F}_q^2$ that are parallel to $\ell$. Therefore, $\Omega :=\bigcup_{\ell \in \pg(1,q) } \Omega_\ell$ is exactly the set of lines of $\mathbb{F}_q^2$. Note that we can also view $\Omega$ as the lines of the incidence structure $\left(\mathbb{F}_q^2,L, \sim  \right)$, where $L = \{L_{u,v} \mid u,v\in \mathbb{F}_q^2,\ v\neq 0\}$ and $v\sim \ell$, for $v\in\mathbb{F}_q^2$ and $\ell \in L$, if and only if $v\in \ell$. This incidence structure is the affine plane $\ag(2,q)$.


As $\gl(2,q)$ acts transitively on $\pg(1,q)$, it is easy to see that $\agl(2,q)$ acts transitively on $\Omega$.
Since the elements of $\gl(2,q)\leq \agl(2,q)$ leave $\pg(1,q)$ invariant, for any $\ell\in \pg(1,q)$, the set $\Omega_\ell$ is invariant by the action of an element of $\agl(2,q)$ or is mapped to some other $\Omega_{\ell^\prime}$, where $\ell^\prime \in \pg(1,q) \setminus\{ \ell\}$. That is, $\Omega_\ell$ is a block for the action of $\agl(2,q)$ on $\Omega$. Therefore, $\agl(2,q)$ acts imprimitively on $\Omega$.

As elements of $\agl(2,q)$ are affine transformations, the pair of parallel lines $(l,l^\prime)\in \Omega_\ell \times \Omega_\ell$ can be mapped by $\agl(2,q)$ to any other pair of parallel lines. However, if $(l,l^\prime)\in \Omega_\ell \times \Omega_{\ell^\prime}$, for distinct $\ell,\ell^\prime \in \pg(1,q)$, then no element of $\agl(2,q)$ can map $(\ell,\ell^\prime)$ to a pair of parallel lines. In addition, one can prove that any pair of non-parallel lines can be mapped to any other pair of non-parallel lines. In other words, $\agl(2,q)$ acting on $
\Omega^2$ has exactly $3$ orbits. We formulate this result as the following lemma.
\begin{lem}
	The group $\agl(2,q)$ acting on $\Omega$ is a rank 3 imprimitive group.\label{eq:rank}
\end{lem}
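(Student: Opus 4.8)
The plan is to verify the two assertions separately: that $G := \agl(2,q)$ is imprimitive on $\Omega$, and that it has rank $3$, i.e.\ exactly three orbits on $\Omega \times \Omega$. The imprimitivity is already in hand from the discussion preceding the statement: each set $\Omega_\ell$ (for $\ell \in \pg(1,q)$) is a block, since $\gl(2,q) \leq G$ permutes the directions $\pg(1,q)$ among themselves while translations fix every direction, so each element of $G$ either fixes $\Omega_\ell$ setwise or maps it onto some $\Omega_{\ell'}$. Thus $\{\Omega_\ell \mid \ell \in \pg(1,q)\}$ is a nontrivial block system and $G$ is imprimitive.

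For the rank, I would partition $\Omega \times \Omega$ into three $G$-invariant pieces and show each is a single orbit. Call an ordered pair $(l,l')$ of type (D) if $l = l'$; of type (P) if $l \neq l'$ but $l,l'$ lie in a common $\Omega_\ell$ (distinct parallel lines); and of type (N) otherwise (non-parallel lines). The key invariant is the direction map $\Omega \to \pg(1,q)$ sending a line $b + \ell$ to its direction $\ell$: an affine map $(c,A)$ sends $b + \ell$ to $Ab + c + A\ell$, whose direction is $A\ell$, so affine maps preserve the relation ``$l$ and $l'$ have equal (resp.\ unequal) direction.'' Consequently each of (D), (P), (N) is a union of $G$-orbits, and since all three are nonempty for $q \geq 2$, the group $G$ has at least three orbits on $\Omega \times \Omega$.

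It then remains to prove transitivity within (P) and within (N); type (D) is a single orbit because $G$ is transitive on $\Omega$. For (N): a pair of non-parallel lines meets in a unique point of $\ag(2,q)$, so using translations I may move that point to $0$, whereupon both lines become $1$-dimensional subspaces, and the claim reduces to the $2$-transitivity of $\gl(2,q)$ on $\pg(1,q)$ on ordered pairs of distinct directions, which is immediate since any two linearly independent vectors extend to a basis. For (P): given two pairs of distinct parallel lines, I first use the transitivity of $\gl(2,q)$ on $\pg(1,q)$ to reduce both pairs to a common direction $\ell$, and then observe that the subgroup of $G$ stabilizing $\ell$ induces on the quotient $\mathbb{F}_q^2 / \ell \cong \mathbb{F}_q$ a group containing $\agl(1,q)$ (the diagonal part of the $\ell$-stabilizer in $\gl(2,q)$ realizes all of $\gl(1,q)$, and translations realize all of $\mathbb{F}_q$). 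Since $\agl(1,q)$ is sharply $2$-transitive on $\mathbb{F}_q$ and $\Omega_\ell$ is exactly the set of $\ell$-cosets, $G$ is transitive on ordered pairs of distinct lines in $\Omega_\ell$. Combining, (P) and (N) are each a single orbit, so $G$ has exactly three orbits, i.e.\ rank $3$.

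The step I expect to require the most care is the reduction for type (P): one must check that after applying a $\gl(2,q)$-element to align directions, the induced action on the one-dimensional quotient $\mathbb{F}_q^2/\ell$ genuinely realizes the full (sharply $2$-transitive) group $\agl(1,q)$, so that any two distinct parallel lines in a fixed direction can be sent to any other such pair. The type (N) case and the invariance argument via the direction map are routine once parallelism is identified with equality of directions.
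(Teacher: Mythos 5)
Your proof is correct and follows essentially the same route as the paper: imprimitivity via the block system $\{\Omega_\ell \mid \ell \in \pg(1,q)\}$, and rank $3$ via the partition of $\Omega\times\Omega$ into equal pairs, distinct parallel pairs, and non-parallel pairs. The paper only sketches this argument (asserting the parallel and non-parallel orbit claims with ``one can prove''), so your verification via the direction map, the induced $\agl(1,q)$-action on $\mathbb{F}_q^2/\ell$, and the intersection-point reduction for non-parallel lines simply supplies the details the paper leaves implicit.
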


\subsection{Action of Singer subgroups of $\gl(2,q)$ as subgroups of $\agl(2,q)$}
We recall that for $n\geq 1$, $\gl(n,q)$ admits elements of order $q^{n}-1$. These elements  are called \emph{Singer cycles}, and a subgroup of order $q^n-1$ generated by a Singer cycle is called a \emph{Singer subgroup.}
We recall the following observation about Singer cycles.

\begin{prop}
If $A$ is a Singer cycle of $\gl(2,q)$, then the subgroup $\langle A \rangle$ acts regularly on $\mathbb{F}_q^2 \setminus\{0\}$.\label{eq:regular}
\end{prop}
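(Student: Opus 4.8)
The plan is to exploit the numerical coincidence $|\langle A\rangle| = q^2-1 = |\mathbb{F}_q^2\setminus\{0\}|$. Since a group acting \emph{freely} (semiregularly) on a set whose cardinality equals the group order must act regularly, it suffices to prove that no nontrivial power of $A$ fixes a nonzero vector; equivalently, that $A^i - I$ is invertible for every $i$ with $1\leq i\leq q^2-2$. This reduces the whole statement to a computation with the eigenvalues of $A$.

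First I would pin down those eigenvalues. Because $A$ has order $q^2-1$, its two eigenvalues cannot both lie in $\mathbb{F}_q$: a matrix in $\gl(2,q)$ whose eigenvalues are in $\mathbb{F}_q^*$ has order dividing either $q-1$ (the diagonalizable case) or $p(q-1)$ (a single Jordan block), both strictly less than $q^2-1$ since $p\leq q$. Hence the characteristic polynomial of $A$ is irreducible over $\mathbb{F}_q$, its roots are a pair of Galois conjugates $\lambda,\lambda^q\in\mathbb{F}_{q^2}\setminus\mathbb{F}_q$, and $A$ is diagonalizable over $\mathbb{F}_{q^2}$ with these distinct eigenvalues. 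Since $\gcd(q,q^2-1)=1$, the elements $\lambda$ and $\lambda^q$ have the same multiplicative order, so the order of $A$ equals the order of $\lambda$; thus $\lambda$ is a primitive element of $\mathbb{F}_{q^2}$, of multiplicative order exactly $q^2-1$.

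With this in hand the final step is immediate. For $1\leq i\leq q^2-2$ the eigenvalues of $A^i$ are $\lambda^i$ and $\lambda^{qi}$. Primitivity gives $\lambda^i\neq 1$, while $\lambda^{qi}=1$ would force $q^2-1\mid qi$, hence $q^2-1\mid i$ (again using $\gcd(q,q^2-1)=1$), which is impossible in the given range. Therefore $1$ is not an eigenvalue of $A^i$, so $\det(A^i-I)=(\lambda^i-1)(\lambda^{qi}-1)\neq 0$ and $A^i$ fixes only the zero vector. The action is free, and by the cardinality count it is regular.

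Conceptually, this is just the statement that identifying $\mathbb{F}_q^2$ with $\mathbb{F}_{q^2}$ as an $\mathbb{F}_q$-space turns $A$ into multiplication by a primitive element $\lambda$, so that $\langle A\rangle$ becomes $\mathbb{F}_{q^2}^*$ acting on itself by multiplication, which is sharply transitive; I would keep this as the guiding picture. The only genuine obstacle is the eigenvalue bookkeeping of the middle paragraph — establishing that the order of $A$ forces $\lambda$ to be primitive and then ruling out $1$ as an eigenvalue of every intermediate power $A^i$ — and the subtle point there is the repeated use of $\gcd(q,q^2-1)=1$ to transfer order statements between $\lambda$ and its conjugate $\lambda^q$.
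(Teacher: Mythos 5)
Your proof is correct, but note that the paper offers no proof of this proposition at all: it is stated as a recalled, well-known observation about Singer cycles, the implicit justification being exactly the ``guiding picture'' you describe --- identify $\mathbb{F}_q^2$ with $\mathbb{F}_{q^2}$ as an $\mathbb{F}_q$-vector space, so that a Singer subgroup becomes $\mathbb{F}_{q^2}^*$ acting on itself by multiplication, which is sharply transitive. Your argument replaces that identification by a self-contained linear-algebra verification, and every step checks out: the order $q^2-1$ rules out both eigenvalues lying in $\mathbb{F}_q$ (the bounds $q-1$ and $p(q-1)$ for the diagonalizable and Jordan-block cases are both strictly below $(q-1)(q+1)$), hence the characteristic polynomial is irreducible, the eigenvalues $\lambda,\lambda^q$ are Galois-conjugate and, via $\gcd(q,q^2-1)=1$, are primitive in $\mathbb{F}_{q^2}$; then no power $A^i$ with $1\leq i\leq q^2-2$ has eigenvalue $1$, so the action is free, and freeness together with $|\langle A\rangle| = q^2-1 = |\mathbb{F}_q^2\setminus\{0\}|$ forces a single orbit, i.e.\ regularity. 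What your route buys is that it works directly from the paper's definition of a Singer cycle as an arbitrary element of order $q^2-1$: making the field-theoretic picture rigorous for such an $A$ (rather than for the specific multiplication map used to construct Singer cycles) would in any case require your irreducibility step, after which one would observe that $\mathbb{F}_q[A]$ is a field with $q^2$ elements acting on $\mathbb{F}_q^2$ as a one-dimensional vector space; your eigenvalue bookkeeping is an equally short and arguably more elementary way to finish.
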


For any matrix $C\in \gl(2,q)$, we define 
\begin{align*}
G_q(C) := \left\{ (b,B) \mid B \in \langle C \rangle,\ b\in \mathbb{F}_q^2 \right\}.
\end{align*}

Now, let $A$ be an arbitrary Singer cycle of $\gl(2,q)$. 
By Proposition~\ref{eq:regular}, it is easy to see that the action of 
$$H_q := \left\{ (0,B) \in \agl(2,q) \mid B \in \langle A \rangle \right\}$$ 
on $\pg(1,q)$ is transitive. The latter implies that the action of the subgroup $G_q(A) $ on $\Omega$ is transitive. To see this, let $\ell = \ell_0 +b$ and $\ell^\prime = \ell_0^\prime +b^\prime$ be two lines in $\Omega$ such that $\ell_0$ and $\ell_0^\prime$ are $1$-dimensional subspaces  and $b,b^\prime \in \mathbb{F}_q^2$. By transitivity of $H_q$ on $\pg(1,q)$, there exists $(0,B) \in H_q$ such that $(0,B)(\ell_0) = \ell_0^\prime$. Hence, 
\begin{align*}
	( b^\prime -Bb,B) (\ell) &= ( b^\prime - Bb,B)(\ell_0 +b) 
	= B\ell_0 + Bb + b^\prime -Bb = \ell_0^\prime + b^\prime = \ell^\prime.
\end{align*}
Thus, $G_q(A)$ is transitive. It is straightforward to verify that for any $\ell\in \pg(1,q)$, $\Omega_\ell$ is a block of $G_q(A)$.
Therefore, we have the following.
\begin{prop}
	The group $G_q(A)$ \mbox{ acts imprimitively on } $\Omega$  and $\Omega_{\ell}$ is a block of $G_q(A)$, for any $\ell \in \pg(1,q)$.
\end{prop}

\subsection{Kernel of the action of $G_q(A)$}
In this subsection, we study the kernel of the action of $G_q(A)$ on the system of imprimitivity $\{\Omega_\ell \mid \ell \in \pg(1,q) \}$.

To avoid any confusion, we use the notation $\stab_{G_q(A)}(l)$ in the remainder of Section~\ref{sect:proof1} to denote the point-stabilizer of $\ell \in \Omega$ in $G_q(A)$, instead of the standard notation used in the theory of permutation groups. Similarly, for any $S\subset \Omega$, we use the notation $\stab(G_q(A),S)$ for the setwise stabilizer of $S$ in $G_q(A)$.

By Lemma~\ref{eq:rank}, the action of $\agl(2,q)$ on $\Omega$ has a unique system of imprimitivity, namely the set $\left\{ \Omega_\ell \mid \ell \in \pg(1,q) \right\}$. 
Define
$$M_q := \bigcap_{\ell \in \pg(1,q)} \stab({G_q}(A), \Omega_\ell).$$

We prove the following lemma.
\begin{lem}
	The affine transformation $(b,B) \in M_q$ if and only if there exists $k\in \mathbb{F}_q^*$ such that $B = k I$, where $I$ is the $2\times 2$ identity matrix.\label{lem:stab_blocks}
\end{lem}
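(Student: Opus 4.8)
The plan is to reduce membership in $M_q$ to a condition on the linear part $B$ alone, and then to pin down $B$ by elementary linear algebra.

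First I would describe how an element $(b,B)\in G_q(A)$ acts on the blocks $\Omega_\ell$. A typical line of $\Omega_\ell$ has the form $\ell+c$ with $c\in\mathbb{F}_q^2$, and a direct computation gives
\[
(b,B)(\ell+c)=\{B(x+c)+b \mid x\in\ell\}=(B\ell)+(Bc+b).
\]
Since $B\in\gl(2,q)$, the image $B\ell$ is again a $1$-dimensional subspace, so $(b,B)$ carries $\Omega_\ell$ onto $\Omega_{B\ell}$; in particular the translation $b$ plays no role in determining the target block. As distinct elements of $\pg(1,q)$ index distinct blocks, $(b,B)$ stabilises $\Omega_\ell$ precisely when $B\ell=\ell$. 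Intersecting over all $\ell\in\pg(1,q)$, I obtain that $(b,B)\in M_q$ if and only if $B$ fixes every $1$-dimensional subspace of $\mathbb{F}_q^2$.

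For the ``only if'' direction I would argue that a matrix fixing every line through the origin must be scalar. If $B\ell=\ell$ for all $\ell\in\pg(1,q)$, then every nonzero vector $v$ satisfies $Bv\in\langle v\rangle$, so $v$ is an eigenvector of $B$. Choosing a basis $v_1,v_2$ with $Bv_i=\lambda_i v_i$ and applying $B$ to $v_1+v_2$ forces $\lambda_1=\lambda_2=:k$, whence $B=kI$; invertibility of $B$ gives $k\in\mathbb{F}_q^*$. Conversely, if $B=kI$ with $k\in\mathbb{F}_q^*$, then $B\ell=\ell$ for every $\ell$, so $(b,kI)$ stabilises every block.

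The one point requiring care, and the main obstacle, is checking that such scalar transformations actually lie in $G_q(A)$, i.e.\ that $kI\in\langle A\rangle$ for each $k\in\mathbb{F}_q^*$; otherwise the ``if'' direction would be vacuous for some $k$. Here I would invoke the field model of a Singer cycle: identifying $\mathbb{F}_q^2$ with $\mathbb{F}_{q^2}$ realises $\langle A\rangle$ as the image of $\mathbb{F}_{q^2}^*$ acting by multiplication, under which the subfield scalars $\mathbb{F}_q^*$ correspond exactly to the matrices $kI$. Thus $\langle A\rangle$ contains the centre $\{kI\mid k\in\mathbb{F}_q^*\}$ of $\gl(2,q)$. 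Since all Singer subgroups are conjugate and the centre is invariant under conjugation, this holds for the arbitrary Singer cycle $A$, completing the ``if'' direction and hence the proof.
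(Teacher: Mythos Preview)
Your proof is correct and the core argument---reducing membership in $M_q$ to the condition that $B$ fixes every line through the origin, then using the standard eigenvector argument to force $B$ scalar---is exactly what the paper does (the paper even uses the same test vectors $e_1$, $e_2$, $e_1+e_2$).

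Where you go further is in verifying that every scalar $kI$ actually lies in $\langle A\rangle$, so that $(b,kI)\in G_q(A)$ in the first place. The paper's proof does not address this at all; it simply asserts the ``if'' direction by noting that $(0,kI)$ fixes every element of $\pg(1,q)$, tacitly assuming $(b,kI)\in G_q(A)$. Your use of the field model (and the observation that the centre is conjugation-invariant, so the conclusion transfers to an arbitrary Singer subgroup) plugs this gap cleanly. The paper implicitly relies on this fact again in the proof of Lemma~\ref{lem:stabisfix} and in the final count $[G_q(A):M_q]=q+1$, so your extra paragraph is a genuine improvement in rigour rather than a detour.
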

\begin{proof}
	It is easy to see that if $B= kI$, for some $k\in \mathbb{F}_q^*$, then $(0,B)$ fixes every element of $\pg(1,q)$. Therefore, $(b,B)$ leaves $\Omega_\ell$ invariant for any $\ell \in \pg(1,q)$.
	
	If $(b,B) \in M_q$, then $(0,B)$ fixes every element of $\pg(1,q)$. In particular, there exists $k_1,k_2\in \mathbb{F}_q^*$ such that 
	\begin{align*}
		(0,B) 		\begin{bmatrix}
			1\\
			0
		\end{bmatrix}
		&= B 
		\begin{bmatrix}
			1\\
			0
		\end{bmatrix}
		 = k_1 
		 \begin{bmatrix}
		 	1\\
		 	0
		 \end{bmatrix},
		 \mbox{ and } \ \ \ 
		 (0,B)
		 \begin{bmatrix}
		 	0\\
		 	1
		 \end{bmatrix}
		 = B
		 \begin{bmatrix}
		 	0\\
		 	1
		 \end{bmatrix}
		 =k_2
		 \begin{bmatrix}
		 0\\
		 1
		 \end{bmatrix}.
	\end{align*}
	Therefore, the matrix $B = \operatorname{diag}(k_1,k_2)$. The $1$-dimensional subspace generated by the vector
	$u = \begin{bmatrix}
	1\\
	1
	\end{bmatrix}$
	forces $k_1 = k_2$, since $Bu = k u$ for some $k \in \mathbb{F}_q^*$. Hence $B = k I$. 
\end{proof}
We present an immediate corollary of this.
\begin{cor}
	The subgroup $M_q$ of $G_q(A)$ is intersecting.\label{cor:int}
\end{cor}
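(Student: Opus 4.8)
The plan is to show that the subgroup $M_q$ contains no derangements and then invoke Lemma~\ref{lem:intersecting-subgroups}, which asserts that a subgroup is intersecting precisely when it is derangement-free. By Lemma~\ref{lem:stab_blocks}, every element of $M_q$ has the form $(b, kI)$ for some $k \in \mathbb{F}_q^*$ and some $b \in \mathbb{F}_q^2$, so the entire task reduces to exhibiting, for each such affine transformation, a line $\ell \in \Omega$ that it fixes.

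First I would split into two cases according to the scalar $k$. When $k = 1$, the element is a pure translation $(b, I)$ acting as $v \mapsto v + b$. A translation by $b$ fixes (setwise) every affine line whose direction is parallel to $b$; concretely, the line through the origin in the direction $b$ together with all its translates are permuted, but more to the point, any line $L_{u,b} = \{u + tb \mid t \in \mathbb{F}_q\}$ is mapped to itself since adding $b$ just shifts the parameter $t$. Thus $(b,I)$ has a fixed point in $\Omega$, and if $b = 0$ the element is the identity, which trivially fixes every line.

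For the remaining case $k \neq 1$, the transformation is $v \mapsto kv + b$. The key observation is that $kI - I = (k-1)I$ is invertible, so the affine map has a unique fixed vector $v_0 = (I - kI)^{-1}b = (1-k)^{-1}b$ in $\mathbb{F}_q^2$; that is, $kv_0 + b = v_0$. Any affine line passing through this fixed vector $v_0$ is then sent to another line through $v_0$ (since the linear part $kI$ fixes $v_0$ as a point and scales directions), and because $kI$ preserves every $1$-dimensional subspace direction, such a line is in fact mapped to itself. Hence the element fixes any line through $v_0$, giving a fixed point in $\Omega$.

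The main obstacle, and the only point requiring genuine care, is verifying the directional claim in the second case — that a line through the fixed vector $v_0$ is actually stabilized rather than merely sent to a parallel line through $v_0$. This follows because $kI$ acts as the scalar $k$ on every direction vector, so the image line through $v_0$ has the same direction subspace as the original and therefore coincides with it. Once both cases produce a fixed line, every element of $M_q$ lies in some point-stabilizer, so $M_q$ contains no derangement, and Lemma~\ref{lem:intersecting-subgroups} immediately yields that $M_q$ is intersecting.
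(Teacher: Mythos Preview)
Your proposal is correct and follows essentially the same route as the paper: both arguments use the description of $M_q$ from Lemma~\ref{lem:stab_blocks}, split into the cases $k=1$ and $k\neq 1$, and in the latter case locate the fixed vector $v_0=(1-k)^{-1}b$ and observe that every line through it is stabilized because $kI$ preserves all directions. The only cosmetic difference is that you appeal explicitly to Lemma~\ref{lem:intersecting-subgroups}, whereas the paper simply states that it suffices to produce a fixed line for each element.
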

\begin{proof}
	It suffices to prove that any element of $M_q$ has a fixed point. Let  $(b,kI_2) \in M_q$. If $k=1$, then it is obvious that $(b,I)$ fixes every line in the block $\Omega_\ell$, where $\ell \in \pg(1,q)$ such that $b \in \ell$.
	
	If $k\neq 1$, then we prove that there exist $\beta \in \mathbb{F}_q^2$ such that for any $\ell \in \pg(1,q)$, $(b,kI)$ fixes the line $\ell+\beta$. If $(b,kI)$ fixes this line, then we must have
	\begin{align*}
		(b,kI)(\ell + \beta) &= k\ell + k \beta +b\\
		&= \ell + k \beta + b = \ell + \beta.
	\end{align*}
	In other words, we should find $\beta$ such that $(1-k)\beta - b \in \ell$, for any $\ell \in \pg(1,q)$. For $\beta =(1-k)^{-1} b$, we have $(1-k)\beta - b=0 \in \ell$. Moreover, the solution $\beta=(1-k)^{-1} b$ does not depend on $\ell$ since every element of $\pg(1,q)$ contains $0$.
	
	We conclude that when $k = 1$, then $(b,kI)$ fixes every line of the block $\Omega_{\ell}$, with $b\in \ell$ and if $k \neq 1$, then $(b,kI)$ fixes every line of the form $\ell + (1-k)^{-1} b \in \Omega$, for any $\ell \in \pg(1,q)$.
\end{proof}

We prove the following lemma about the relation between the kernel of the action of  $G_q(A)$ on $\{ \Omega_\ell \mid \ell \in \pg(1,q)\}$ and the subgroup $\Fix(G_q(A))$ generated by the non-derangements of $G_q(A)$.

\begin{lem}
	The subgroup $\Fix(G_q(A))$ is equal to $ M_q$.\label{lem:equal}\label{lem:stabisfix}
\end{lem}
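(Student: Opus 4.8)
The plan is to establish the two inclusions $\Fix(G_q(A)) \subseteq M_q$ and $M_q \subseteq \Fix(G_q(A))$ separately. The second inclusion comes essentially for free from Corollary~\ref{cor:int}: since $M_q$ is intersecting, every element of $M_q$ has a fixed point in $\Omega$, hence lies in some point-stabilizer $\stab_{G_q(A)}(\ell)$ and therefore in the subgroup $\Fix(G_q(A))$ generated by all point-stabilizers. This yields $M_q \subseteq \Fix(G_q(A))$ immediately.

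The substance of the argument is the reverse inclusion. Because $M_q$ is a subgroup and $\Fix(G_q(A))$ is generated by the union of the point-stabilizers $\stab_{G_q(A)}(\ell)$ over all lines $\ell \in \Omega$, it suffices to show that each such point-stabilizer is contained in $M_q$; equivalently, by Lemma~\ref{lem:stab_blocks}, that any $(b,B) \in G_q(A)$ fixing an affine line has scalar matrix part $B = kI$. First I would record the elementary observation that if $(b,B)$ fixes a line $\ell_0 + c$, where $\ell_0 \in \pg(1,q)$ is its direction, then $(b,B)(\ell_0 + c) = B\ell_0 + (Bc + b)$ is an affine line with direction $B\ell_0$, so equality of the two lines forces $B\ell_0 = \ell_0$ as subspaces. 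In other words, $B$ must leave the $1$-dimensional subspace $\ell_0$ invariant.

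The key step, and the main obstacle, is then to show that an element $B$ of the cyclic group $\langle A \rangle$ leaving some $1$-dimensional subspace invariant must be a scalar matrix. Here I would use crucially that $A$ is a Singer cycle: identifying $\mathbb{F}_q^2$ with the field $\mathbb{F}_{q^2}$, the subgroup $\langle A \rangle$ acts as multiplication by the powers of a generator $\zeta$ of $\mathbb{F}_{q^2}^*$, and $B = \zeta^i$ stabilizes an $\mathbb{F}_q$-line $\mathbb{F}_q x$ if and only if $\zeta^i x \in \mathbb{F}_q x$, that is, $\zeta^i \in \mathbb{F}_q^*$. Hence the only powers of $A$ fixing a direction are the scalar matrices $kI$ with $k \in \mathbb{F}_q^*$, which form the unique subgroup of order $q-1$ in $\langle A \rangle$. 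It follows that $B = kI$, so by Lemma~\ref{lem:stab_blocks} we obtain $(b,B) \in M_q$; thus $\stab_{G_q(A)}(\ell) \subseteq M_q$ for every $\ell \in \Omega$, and therefore $\Fix(G_q(A)) \subseteq M_q$. Combining the two inclusions gives $\Fix(G_q(A)) = M_q$.
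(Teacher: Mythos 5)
Your proof is correct, and the easy inclusion $M_q \leq \Fix(G_q(A))$ is handled just as in the paper (every element of the intersecting subgroup $M_q$ has a fixed point, by Corollary~\ref{cor:int}); but your argument for the substantial inclusion $\Fix(G_q(A)) \leq M_q$ takes a genuinely different route. The paper fixes the single line $\ell \in \pg(1,q)$ through the first standard basis vector, exhibits the $q(q-1)$ transformations $(b,kI)$ with $b\in\ell$, $k\in\mathbb{F}_q^*$ inside $\stab_{G_q(A)}(\ell)$, and then counts: by orbit--stabilizer, $|\stab_{G_q(A)}(\ell)| = q^2(q^2-1)/(q(q+1)) = q(q-1)$, so these transformations exhaust the stabilizer, which is therefore contained in $M_q$; containment of all the other point-stabilizers then follows from transitivity together with the normality of $M_q$, since point-stabilizers are conjugate. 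You instead treat an arbitrary line directly: a transformation fixing an affine line must have linear part stabilizing its direction, and by the structure of Singer subgroups (identifying $\mathbb{F}_q^2$ with $\mathbb{F}_{q^2}$, so that $\langle A\rangle$ acts as $\mathbb{F}_{q^2}^*$ by multiplication) the only elements of $\langle A\rangle$ stabilizing a $1$-dimensional $\mathbb{F}_q$-subspace are the scalar matrices; Lemma~\ref{lem:stab_blocks} then places $(b,B)$ in $M_q$. What each approach buys: yours is uniform in the line, so it needs neither the orbit--stabilizer count nor the conjugacy/normality step, and it isolates the structural reason the lemma holds; its price is the appeal to the field model of a Singer subgroup, a standard fact that the paper never develops (it records only the regular action, Proposition~\ref{eq:regular}). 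Note also that the paper's counting step silently uses the same ingredient in the weaker form $\mathbb{F}_q^* I \leq \langle A\rangle$ (needed for $(b,kI)$ to lie in $G_q(A)$ at all), namely that the scalars form the unique subgroup of order $q-1$ of the cyclic group $\langle A\rangle$; your formulation makes this explicit, which is arguably a small gain in rigor.
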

\begin{proof}
	Let $(b,kI)\in M_q$. In the proof of Corollary~\ref{cor:int}, we showed that a transformation of $(b,kI)$ either fixes every element of $\Omega_\ell$, for some $\ell\in \pg(1,q)$, or it fixes exactly one line in each $\Omega_{\ell}$. Therefore, $M_q\leq \Fix(G_q(A))$.

	Next, we will prove that the point-stabilizer $\operatorname{Stab}_{G_q(A)}(\ell)$ of $\ell$ in $G_q(A)$ is a subgroup of $ M_q$, for any $\ell\in \Omega$. First, let $\ell \in \pg(1,q)$ be the line that contains the point 
	$\begin{bmatrix}
		1\\
		0
	\end{bmatrix} \in \mathbb{F}_q^2$. Observe that for $b\in \mathbb{F}_q^2$, $\ell + b = \ell$ if and only if $b \in \ell$. Therefore, the affine transformation $(b,kI)\in \operatorname{Stab}_{G_q(A)}(\ell)$, for any $b\in \ell$ and $k\in \mathbb{F}_q^*$. There are $q(q-1)$ affine transformations of this form in $\operatorname{Stab}_{G_q(A)}(\ell)$. Arguing by the size of the stabilizer of $\ell$ in $G_q(A)$, we have
	\begin{align*}
		|\operatorname{Stab}_{G_q(A)}(\ell)| &= \frac{q^2(q^2-1)}{q(q+1)} = q(q-1).
	\end{align*}
	We conclude that the point-stabilizer of $\ell$ in $G_q(A)$ is  $$\operatorname{Stab}_{G_q(A)}(\ell) = \left\{ (b,B)\in G_q \mid B = kI,\ b\in \ell,\ k\in \mathbb{F}_q^* \right\} \leq M_q.$$ 
	Since $M_q \triangleleft G_q(A)$ and $G_q(A)$ is transitive, we have $\operatorname{Stab}_{G_q(A)}(\ell) \leq M_q$ for any $\ell \in \Omega$. Therefore, $\Fix(G_q(A)) \leq M_q$. This completes the proof.
\end{proof}

\subsection{Proof of Theorem~\ref{thm:main}}
We prove that the derangement graph $\Gamma_{G_q(A)}$ of $G_q(A)$ is a complete $(q+1)$-partite graph.
By Corollary~\ref{cor:int}, $M_q$ is intersecting, and by Lemma~\ref{lem:equal}, we have $M_q = \Fix(G_q(A))$. Therefore, $\Gamma_{G_q(A)}$ is a complete $k$-partite graph, where $k = [G_q(A): M_q] = \frac{q^2(q^2-1)}{q^2(q-1)} = q+1$.

Note that Lemma~\ref{lem:stabisfix} is crucial to our proof. Indeed, the subgroup generated by the permutations with fixed points in $\agl(2,q)$ acting on $\Omega$, i.e., $\Fix(\agl(2,q))$, is the whole of $\agl(2,q)$; whereas the stabilizer of its unique system of imprimitivity is the proper subgroup $M_q$.

\section{Proof of Theorem~\ref{thm:main-construction}}\label{sect:proof2}
We will construct a transitive permutation group $G$ of degree $n=4\ell$  acting on $[n]$, where $\ell$ is an odd natural number. The derangement $\Gamma_{G}$ of this group $G$ will be a complete multipartite graph with $\frac{n}{2}$ parts. The group $G$ that we will construct is isomorphic to 
\begin{align*}
	(\underbrace{C_2 \times C_2\times \ldots \times C_2}_{\ell-1}) \rtimes D_{\ell},
\end{align*}
where $D_\ell$ is the dihedral group of order $2\ell$.

\subsection{Kernel of the action}
We would like to construct $G$ so that it will have a system of imprimitivity 
\begin{align*}
	\mathcal{B} = \left\{ \{i,i+1\} \mid  \mbox{ for } i \in [n]\cap \left(2\mathbb{Z}+1\right) \right\}.
\end{align*}
For any $i,j\in \left(4\mathbb{Z}+1\right)\cap[n]$, define $\sigma_i := (i\ \ i+1)(i+2\ \ i+3)$ and $\pi_j := \sigma_j \sigma_{4\ell-3}$. Let $S = \{ \pi_j \mid j\in \left(4\mathbb{Z}+1\right)\cap[n]\}$. Notice that $|S| = \ell$, however, $\pi_{4\ell-3} = id \in S$.
We consider the permutation group $H = \langle S \rangle$. It is easy to see that 
$$H \cong \underbrace{C_2 \times C_2\times \ldots \times C_2}_{\ell-1}.$$ 
Moreover, for any fixed $k \in [n]\cap (4\mathbb{Z} +1)$, any subset of the form $\left\{ \sigma_i \sigma_k \mid i \in [n]\cap (4\mathbb{Z}+1)\right\}$ generates $H$.

A permutation of $H$ either fixes, pointwise, an element of $\mathcal{B}$ or interchanges the pair of elements in a set of $\mathcal{B}$. Therefore, $H$ leaves $\mathcal{B}$ invariant. Any $g\in H$ can be written in the form 
\begin{align}
	g = \prod_{j\in [n] \cap (4\mathbb{Z}+1) }\pi_{j}^{k_j},\label{eq:expression}
\end{align}
for some $k_j\in \{0,1\}$. Since $\pi_{4\ell-3} = id$, there are at most $\ell-1$ (which is even) permutations of the form $\pi_j$ in the expression of $g$ in \eqref{eq:expression}. If the number of non-identity terms in \eqref{eq:expression} is even, then $g$ fixes the points $4\ell-3,\ 4\ell-2,\ 4\ell-1, \mbox{ and } 4\ell$. If the number of non-identity terms in \eqref{eq:expression} is odd, then there exists $j\in [n]\cap (4\mathbb{Z}+1)$, $j\neq 4\ell-3$, such that $k_j =0$ (because $\ell-1$ is even). Therefore, $g$ fixes the elements $j,\ j+1,\ j+2 \mbox{, and } j+4$. We conclude that
\begin{align}
	\mbox{ $H$ is an intersecting subgroup of degree $n=4\ell$.}\label{eq:H_int}
\end{align}

The group $G$ will be defined so that $H = \Fix(G)$.

\subsection{ Action of a dihedral group on $H$}
First, we give a permutation $c$, which is a product of four disjoint $\ell$-cycles. Then, we construct a transposition $\tau$ so that $\tau c \tau^{-1} = c^{-1}$. In other words, $\langle c,\tau \rangle = D_\ell$. This subgroup will act on $H$ so that $\langle H, c,\tau \rangle$ is transitive.

For any $i\in \mathbb{Z}$, define $A_i := (i\ \ i+4\ \ \ldots\ \ i+4k\ \ \ldots\ \ i+4(\ell-1))$ to be the permutation of order $\ell$, whose entries in the cycle notation are those of an arithmetic progression of step $4$, and with initial value $i$. Let $$c := A_1A_2A_3A_4.$$ 
We note that $A_1,A_2,A_3,\mbox{ and }A_4$ are pairwise disjoint $\ell$-cycles.
Consider the permutation 
\begin{align*}
	\tau := (1\ 3) (2\ 4)\prod_{i\in \{1\ 2\ \ldots\ \ell-1\}} \left(1+4i\ \ 3 +4(\ell-i) \right)\left( 2+4i\ \ 4 +4(\ell-i) \right).
\end{align*}
The transpositions in the expression of $\tau$ are also pairwise disjoint. Moreover, $\tau$ is a derangement of $\sym(n)$. The following conditions are satisfied by $\tau$
\begin{align}
	\begin{split}
	\tau A_1 \tau^{-1} &= A^{-1}_3,\\
	\tau A_2 \tau^{-1} &= A^{-1}_4,\\
	\tau A_3 \tau^{-1} &= A^{-1}_1,\\
	\tau A_4 \tau^{-1} &= A^{-1}_2.
	\end{split}\label{eq:conjugates}
\end{align}
From \eqref{eq:conjugates}, we deduce that $\tau c \tau^{-1} = c^{-1}$. We conclude that $\langle \tau,c \rangle \cong D_\ell$.

Next, we see how the subgroup $\langle c,\tau\rangle$ acts on $H$. For $i \in [n] \cap (4\mathbb{Z} +1)$ with $i\neq 4\ell-3$, we have 
\begin{align*}
	\nu_i := c \pi_i c^{-1} = c \sigma_i \sigma_{4\ell-3} c^{-1} = \sigma_{i+4}\sigma_1.
\end{align*}
Since $\{ \nu_i \mid i \in [n] \cap (4\mathbb{Z} + 1) \}$ also generates $H$, we conclude that $cHc^{-1} = H$. In addition, for any $i \in [n]\cap (4\mathbb{Z} +1)$, we have
\begin{align*}
	\mu_i := \tau \pi_i \tau^{-1} = \tau \sigma_i\sigma_{4\ell-3} \tau^{-1} =  \sigma_{\tau(i+2)}\sigma_{5}.
\end{align*}
Since $\{\mu_i \mid i\in [n]\cap (4\mathbb{Z} +1)\}$ also generates $H$, we have $\tau H \tau^{-1} = H$.

We conclude that $G := H\langle \tau,c \rangle$ is a permutation group of degree $4\ell$. In addition, it is easy to see that $H\cap \langle \tau,c \rangle = \{id\}$, so we have $G = H \rtimes \langle \tau, c\rangle$. Furthermore, $G$ is transitive because 
\begin{enumerate}[1)]
	\item the orbits of $H\langle c \rangle$ are $\{ 1+4i \mid i \in \{0,1,2,\ldots,\ell-1\} \}\cup \{ 2+4i \mid i\in \{0,1,2,\ldots,\ell-1\} \}$ and $\{3+4i \mid i \in \{ 0,1,2,\ldots,\ell-1 \}\} \cup \{4+4i \mid i \in \{0,1,2,\ldots,\ell-1\}\}$, and
	\item the orbits of $\langle \tau \rangle$ are the sets of the form $\{ 1+4i,3+4(\ell-i) \},\ \{2+4i,4+4(\ell-i)\}$  where $i\in \{0,1,\ldots,\ell-1\}$ $,\ \{2,4\}$, and $\{1,3\}$.
\end{enumerate}
\subsection{Derangement graph of $G$}
The derangement graph of $G$ is a complete multipartite graph with $2\ell$ parts. To prove this, we need to show that $H$ is intersecting and $\Fix(G) = H$. We only need to prove the latter since $H$ is an intersecting subgroup (see \eqref{eq:H_int}).

On one hand, as the elements of $S$ all have fixed points, it is easy to see that $\langle S \rangle =H \leq \Fix(G)$. On the other hand, the subgroup $K = \langle \pi_5,\pi_9,\ldots,\pi_{4i+1},\ldots,\pi_{4\ell-7} \rangle \leq H$ fixes $1$; that is, $K\leq G_1$. Since $|K| = 2^{\ell-2}$ and $|G_1| = \frac{|G|}{4\ell} = 2^{\ell-2}$, we conclude that $G_1=K \leq H$. As $G$ is transitive, the point-stabilizers of $G$ are conjugate. Moreover, since $H \triangleleft G$ (because $G = H \rtimes \langle \tau,c \rangle$) and $G_1\leq H$, we can conclude that $G_i \leq H$, for any $i\in [n]$. Therefore, $\Fix(G)\leq H$.

In conclusion, we know that $\Fix(G) = H$ is intersecting. This is equivalent to $\Gamma_{G}$ being a complete multipartite graph, with $[G:H] = 2\ell$ parts.

\section{Proof of Theorem~\ref{thm:main2}}\label{sect:proof3}

We will prove that every transitive group of degree $2p$, for any odd prime $p$, has intersection density at most $2$ (Theorem~\ref{thm:main2}) by showing that there is a clique of size $p$ in the derangement graph of $G$. In this case, we have $\rho(G) \leq \frac{|\Omega|}{p} = 2$. Therefore, $1\leq \rho(G) \leq 2$ for any transitive group $G$ of degree $2p$. It is proved in \cite[Lemma~5.3]{meagher180triangles} that for any odd $\ell$, there is a transitive group of degree $2\ell$, whose intersection density is $2$. Therefore, we will have $I(2p) = 2$, for any odd prime $p$.

As $p\mid |G|$, by Cauchy's theorem, there exists $\sigma \in G$ whose order is $p$. Therefore, $\sigma$ is either a $p$-cycle or the product of two disjoint $p$-cycles. If the latter holds, then $\sigma$ is a derangement of $G$ and $\langle\sigma \rangle$ is then a clique of size $p$ in $\Gamma_{G}$. So, we may suppose that $\sigma$ is a $p$-cycle.
\subsection{Imprimitive case}  
Since $G \leq \sym(\Omega)$ is imprimitive of degree $2p$, a non-trivial block of imprimitivity of $G$ has size $2$ or $p$. Assume that
\begin{align*}
\sigma = (x_1\ \ x_2\ \ x_3\ \ \ldots\ \ x_p).
\end{align*}
As $p$ is an odd prime and $\sigma \in G$, it is easy to see that $G$ cannot have a system of imprimitivity consisting of sets of size $2$.
We suppose that $G$ has a set of imprimitivity $\mathcal{Q}$ consisting of two subsets of size $p$ of $\Omega$. It is easy to see that $\sigma$ cannot interchange the two blocks of $\mathcal{Q}$ since the support of $\sigma$ only has $p$ elements. Thus, $\sigma$ is in the setwise stabilizer of $\mathcal{Q}$. Suppose that $\mathcal{Q} = \left\{ B, B^\prime \right\}$, where $B=\{x_1,x_2,\ldots,x_p\}$ and $B^\prime = \{y_1,y_2,\ldots,y_p\}$. As $G_{y_1}$ and $G_{x_1}$ are conjugate, there exists an element $\sigma^\prime \in G_{x_1}$, which is a $p$-cycle.
As $\sigma^\prime$ is a $p$-cycle, it must fix $B$ pointwise and act as a $p$-cycle on $B^\prime$.

We conclude that the permutation $\sigma\sigma^\prime\in G$ is a product of two disjoint $p$-cycles. The subgroup $\langle \sigma \sigma^\prime\rangle$ is a clique of size $p$ of $\Gamma_{G}$.

\subsection{Primitive case}
Suppose that $G\leq \sym(\Omega)$ is primitive of degree $2p$. We derive the result of Theorem~\ref{thm:main2} from the following lemma.

\begin{lem}[\cite{wielandt2014finite}]
	Suppose that $p$ is an odd prime. A primitive group of degree $2p$ is either $2$-transitive or every non-identity element of a Sylow $p$-subgroup of $G$ is a product of two disjoint $p$-cycles.\label{lem:primitive}
\end{lem}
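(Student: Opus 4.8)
The plan is to analyze the possible cycle types of an element of order $p$ in $\sym(\Omega)$ and then to use a classical theorem of Jordan to eliminate one of them in the non-$2$-transitive case. First I would record two elementary facts about a Sylow $p$-subgroup $P$ of $G$. Since an element of order $p^2$ would require a cycle of length at least $p^2 > 2p$, the group $\sym(\Omega)$ has no element of order $p^2$; hence every non-identity element of the $p$-group $P$ has order exactly $p$. Moreover, the support of an order-$p$ element is a disjoint union of $p$-cycles, so its size is a positive multiple of $p$ that is at most $2p$; as $3p > 2p$, such an element is either a single $p$-cycle (fixing exactly $p$ points) or a product of two disjoint $p$-cycles (a derangement). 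These are the only two possibilities, and since all Sylow $p$-subgroups are conjugate and conjugation preserves cycle type, $P$ contains a single $p$-cycle if and only if $G$ does.

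The dichotomy in the statement then corresponds to which of these two cycle types occurs in $G$. Suppose first that $G$ contains a single $p$-cycle $\sigma$. Then $\sigma$ fixes exactly $p$ points, and because $p$ is an odd prime we have $p \geq 3$, equivalently $p \leq 2p - 3$. Jordan's classical theorem on primitive groups containing a cycle of prime length fixing at least three points (see \cite{wielandt2014finite}) therefore applies and yields $\alt(\Omega) \leq G$. Since $\alt(\Omega)$ is $2$-transitive on $2p \geq 6$ points, so is $G$, and we land in the first alternative of the lemma.

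It remains to treat the case where $G$ contains no single $p$-cycle. Then every element of order $p$ in $G$ is a product of two disjoint $p$-cycles; in particular, every non-identity element of $P$ (each of which has order $p$ by the first paragraph) is such a product, which is exactly the second alternative. As a consistency check one also sees that $|P| = p$ here: if $|P| = p^2$ then, since $p^2 \nmid 2p$, the $p$-group $P$ cannot act semiregularly, so some non-identity element of $P$ would fix a point and hence be a single $p$-cycle, contradicting our assumption. I expect the only substantive ingredient to be Jordan's theorem; everything else is a short cycle-type and Sylow-order computation. The one point that must be verified carefully is the degree inequality $p \leq 2p - 3$ needed to invoke Jordan, which holds precisely because $p \geq 3$ — this is exactly where the hypothesis that $p$ is \emph{odd} is used.
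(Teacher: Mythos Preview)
The paper does not supply its own proof of this lemma; it is quoted from Wielandt's book \cite{wielandt2014finite} and used as a black box in the proof of Theorem~\ref{thm:main2}. There is therefore nothing in the paper to compare your argument against.

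That said, your argument is correct. The cycle-type analysis---no element of order $p^{2}$ in $\sym(2p)$ since $p^{2}>2p$ for $p\ge 3$, hence every non-identity element of a Sylow $p$-subgroup has order $p$ and is either a single $p$-cycle or a product of two disjoint $p$-cycles---is sound. Invoking Jordan's theorem (a primitive group of degree $n$ containing a $p$-cycle for a prime $p\le n-3$ must contain $\alt(n)$; this is Theorem~13.9 in \cite{wielandt2014finite}) is exactly the right tool for the first case: with $n=2p$ and $p\ge 3$ one has $p\le 2p-3$, so $G\ge \alt(2p)$ and $G$ is $2$-transitive. The second alternative then follows immediately. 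Your observation that the oddness of $p$ is used precisely to ensure $p\le 2p-3$ is on point, and the aside that $|P|=p$ in the second case (via a semiregularity argument) is correct, though not needed for the statement as phrased.
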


By Lemma~\ref{lem:primitive}, we conclude that $G$ is $2$-transitive or $G$ contains a derangement of order $p$. Hence, either $G$ has the EKR property \cite{meagher2016erdHos} (in which case $\rho(G) = 1$) or $\rho(G) \leq 2$.

This completes the proof of Theorem~\ref{thm:main2}.

\section{Further work}
	We finish this paper by posing some open questions.
	In Section~\ref{sect:proof3}, we proved that for any odd prime $p$, a transitive group $G$ of degree $2p$ has intersection density $1 \leq \rho(G) \leq 2$. It follows from the classification of finite simple groups that the only simply primitive groups (i.e., primitive groups that are not $2$-transitive) of degree $2p$ are $\alt(5)$ and $\sym(5)$, both of degree $10$. Using \verb|Sagemath| \cite{sagemath}, the largest intersecting family of $\alt(5)$ is of size $12$, whereas its stabilizer of a point has size $6$. The largest intersecting family of $\sym(5)$ is $12$, which equals the size of its point-stabilizers. We conclude that the group $\alt(5)$ of degree $10$ has the largest intersection density among all primitive groups of degree $2p$, for every odd prime $p$. 
	
	For the imprimitive case, there are infinitely many examples of transitive groups with intersection density equal to $2$. In \cite[Lemma~5.3]{meagher180triangles}, the authors gave a family of transitive groups of degree $2\ell$, for any odd $\ell$, whose derangement graphs are $\ell$-partite and whose intersection density is $2$.  Based on a non-exhaustive search on the small transitive groups of degree $2p$ (where $p$ is an odd prime) available on \verb|Sagemath|, we are inclined to believe that the intersection density of a transitive group of degree $2p$, where $p$ is an odd prime, is an integer. We ask the following question.
\begin{qst}
	Does there exist an odd prime $p$ and  a transitive group $G$ of degree $2p$ such that $\rho(G)$ is not an integer?
\end{qst}

In Theorem~\ref{thm:main-construction}, we proved that there exists a family of transitive groups of degree $4\ell$, for any odd $\ell$, with complete $2\ell$-partite derangement graphs. This further confirms the validity of \cite[Conjecture~6.6 (1)]{meagher180triangles} (see also Conjecture~\ref{conj:complete-multipartite}) about the existence of transitive groups of any degree $n$ which is even but not a power of $2$, with a complete $\frac{n}{2}$-partite derangement graph.
\begin{prob}
	For any  odd $\ell$ and an integer $i \geq 3$, find a transitive group of degree $2^i\ell$ whose derangement graph is a complete $2^{i-1}\ell$-partite graph.
\end{prob}

In Section~\ref{sect:proof1}, we gave an example of a transitive group of degree $q(q+1)$, where $q$ is a prime power, whose intersection density is equal $q$. A non-exhaustive search on small transitive groups of degree $q(q+1)$, which are available on \verb|Sagemath|, shows that the largest intersection density for these groups is $q$. We ask the following question.
\begin{qst}
	Does there exist a transitive group $G$ of degree $q(q+1)$, where $q $ is a prime power, such that $\rho(G) > q$?\label{qst:main}
\end{qst}

Our motivation to work on the EKR property for the transitive group in Section~\ref{sect:proof1} comes from studying the EKR property for $\agl(2,q)$ acting on the lines of $\ag(2,q)$ (see Section~\ref{sect:proof1}), where $q$ is a prime power. Observe that if $H$ and $G$ are transitive permutation groups acting on $\Omega$ and $H\leq G$, then $\Gamma_H$ is an induced subgraph of $\Gamma_{G}$. Using the No-Homomorphism Lemma \cite{albertson1985homomorphisms}, one can prove that $\alpha (\Gamma_{G}) \leq \alpha(\Gamma_H) \frac{|G|}{|H|}$. We deduce from this inequality that if $H$ has the EKR property, then so does $G$. Moreover, $\rho(G) \leq \rho(H)$. 

Recall that the subgroup $G_q(A)$ defined in Section~\ref{sect:proof1} is a subgroup of $\agl(2,q)$ acting on the lines of $\ag(2,q)$. Using the result from the previous paragraph, we know that 
\begin{align*}
	\rho(\agl(2,q)) \leq \rho(G_q(A)) = \frac{q^2(q-1)}{q(q-1)}=  q,
\end{align*}
where $q$ is a prime power and $A$ is a Singer cycle of $\gl(2,q)$. However, we believe that this bound is not sharp. Indeed, from the observation of the behavior of the intersection density of $\agl(2,q)$ ($q \in \{3,4,5,7,8\}$) acting on the lines of $\ag(2,q)$, we make the following conjecture.
\begin{conj}
	For any $\varepsilon >0$, there exists a prime power $q_0$, such that for any prime power $q\geq q_0$, $0\leq \rho(\agl(2,q))-1 \leq \varepsilon$. In particular, $\rho(\agl(2,q)) \in \mathbb{Q} \setminus \mathbb{N}$, for any prime power $q$.
\end{conj}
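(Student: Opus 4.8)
The plan is to reduce the conjecture to a sufficiently precise determination of $\alpha(\Gamma_{\agl(2,q)})$ and then to read off both halves of the statement. Throughout write $G=\agl(2,q)$ in its action on the set $\Omega$ of lines of $\ag(2,q)$, so $n=|\Omega|=q(q+1)$, $|G|=q^{3}(q-1)^{2}(q+1)$, and a line-stabiliser has order $q^{2}(q-1)^{2}$; hence $\rho(G)=\alpha(\Gamma_G)/\bigl(q^{2}(q-1)^{2}\bigr)$, with $\rho(G)\geq 1$ from a point-stabiliser and $\rho(G)\leq q$ already known. Note that Lemma~\ref{lem:clique-coclique} alone will not do better: the induced subgraph $\Gamma_{G_q(A)}$ is complete $(q+1)$-partite, so it contributes cliques of size $q+1$, and since a Singer subgroup contains only $q+1$ pairwise non-agreeing elements up to scalars I expect $\omega(\Gamma_G)$ to stay of order $q$, whence the clique--coclique bound merely reproduces $\rho(G)\leq q$. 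I would therefore attack the spectrum directly: as $\Der(G)$ is a union of conjugacy classes, the eigenvalues of $\Gamma_G$ are $\lambda_\chi=\chi(1)^{-1}\sum_{g\in\Der(G)}\chi(g)$ over $\chi\in\operatorname{Irr}(G)$, and the ratio (Hoffman) bound applied to the least eigenvalue $\tau$ gives $\rho(G)\leq n\cdot\frac{-\tau}{\,|\Der(G)|-\tau\,}$.

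First I would enumerate $\Der(G)$ class by class. An affine map $(b,B)$ fixes a line exactly when $B$ fixes some direction $\ell_0\in\pg(1,q)$ and the induced affine map on the pencil of lines parallel to $\ell_0$ — an affine map of $\mathbb{F}_q^{2}/\ell_0\cong\mathbb{F}_q$ — has a fixed point. Sorting by the conjugacy type of $B$, this shows $(b,B)$ is a derangement precisely when either $B$ has irreducible characteristic polynomial (the elliptic case, valid for every translate $b$) or $B$ is a transvection and $b$ lies off its eigenline. Counting yields
\[
|\Der(G)|=\tfrac12 q^{4}(q-1)^{2}+q(q-1)^{2}(q+1),
\]
and identifies the classes inside $\Der(G)$: each elliptic $\gl(2,q)$-class fuses with all translations into one $G$-class of size $q^{3}(q-1)$, while the transvection derangements split into a controlled family of $G$-classes. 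This is exactly the input the eigenvalue formula requires.

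Next I would build $\operatorname{Irr}(G)$ by Clifford theory for the split extension $\mathbb{F}_q^{2}\rtimes\gl(2,q)$: the additive characters of $\mathbb{F}_q^{2}$ form two $\gl(2,q)$-orbits, the trivial one (inertia group $\gl(2,q)$, giving the characters inflated from $\gl(2,q)$) and the unique nontrivial one (inertia group of order $q(q-1)$). Evaluating $\lambda_\chi$ on each family and isolating the least value $\tau$, I would exploit that $G$ is a rank-$3$ group, so that the two nontrivial constituents of the permutation character on $\Omega$ are the natural candidates for $\tau$; I expect $-\tau=\frac{|\Der(G)|}{n-1}\bigl(1+o(1)\bigr)$, only slightly above the value $|\Der(G)|/(n-1)$ that is forced in the $2$-transitive case and that would give $\rho=1$ exactly. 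Substituting into the ratio bound and checking that $n\cdot\frac{-\tau}{|\Der(G)|-\tau}\to 1$ as $q\to\infty$ would settle the asymptotic half of the conjecture.

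The main obstacle is the exact value of $\alpha(\Gamma_G)$, which the ``in particular'' clause needs for every $q$. The ratio bound is only an upper bound, and the obvious derangement-free subgroups fail to be intersecting: the stabiliser of a point of $\mathbb{F}_q^{2}$ contains the elliptic linear maps, and the stabiliser of a direction contains transvection derangements, so no subgroup strictly between a line-stabiliser and $G$ witnesses $\rho(G)>1$. I would therefore have to produce an intersecting family strictly larger than a line-stabiliser by other means — most plausibly as a union of cosets read off from the $\tau$-eigenspace, i.e.\ via the equality case of the ratio bound analysed inside the orbital association scheme of $G$ on $\Omega$ — and prove its optimality. Granting such a matching construction, one obtains a closed rational formula $\rho(G)=\rho(q)$; then combining $\rho(q)\to 1$, the uniform lower bound $\rho(q)>1$ coming from the construction, and a direct \verb|Sagemath| check that $\rho(q)<2$ for the finitely many $q$ below the asymptotic threshold forces $1<\rho(q)<2$, hence $\rho(\agl(2,q))\in\mathbb{Q}\setminus\mathbb{N}$ for every prime power $q$. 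Exhibiting this extremal family and proving the ratio bound is tight is where I expect essentially all of the difficulty to lie.
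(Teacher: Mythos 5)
You should first note that this statement is a \emph{conjecture} that the paper deliberately leaves open: the author's only support for it is \verb|Sagemath| data for $q\in\{3,4,5,7,8\}$ together with the upper bound $\rho(\agl(2,q))\leq\rho(G_q(A))=q$ obtained from the No-Homomorphism Lemma, so there is no proof in the paper to compare against, and your attempt has to stand entirely on its own. Parts of it do: your characterization of derangements (the pair $(b,B)$ is a derangement on lines iff $B$ has irreducible characteristic polynomial, or $B$ is a transvection and $b$ lies off its eigenline) checks out against the fixed-line condition $b\in\ell_0+(B-I)\mathbb{F}_q^2$ for a $B$-invariant direction $\ell_0$, and the resulting count $|\Der(G)|=\tfrac12 q^4(q-1)^2+q(q-1)^2(q+1)$ is correct, as is the reduction $\rho(G)=\alpha(\Gamma_G)/\bigl(q^2(q-1)^2\bigr)$ and your (accurate) observation that the clique--coclique bound of Lemma~\ref{lem:clique-coclique} cannot improve on $\rho(G)\leq q$.

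The gap is that every decisive step is conditional. You never actually bound the least eigenvalue $\tau$ of $\Gamma_G$: the claim $-\tau=\frac{|\Der(G)|}{n-1}(1+o(1))$ is stated as an expectation, and your rank-$3$ heuristic conflates the spectrum of the orbital scheme on $\Omega$ with the spectrum of the Cayley graph on $G$, where the minimum of $\lambda_\chi$ ranges over \emph{all} of $\operatorname{Irr}(G)$ --- in particular over the characters lying above the nontrivial orbit of additive characters of the translation subgroup, whose eigenvalues you set up via Clifford theory but never estimate; without that, even the asymptotic half $\rho(\agl(2,q))\to1$ is unproven. Worse, the ``in particular'' clause requires $\rho(\agl(2,q))>1$ for \emph{every} prime power $q$, i.e.\ an intersecting family strictly larger than a line-stabilizer for all $q$, and you explicitly defer this (``granting such a matching construction''); the ratio bound is one-sided and gives no such family, and your proposed finite \verb|Sagemath| verification is not actually finite unless the $o(1)$ term is made effective, which nothing in the sketch does. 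Your closing admission that exhibiting the extremal family and proving tightness of the ratio bound is ``where essentially all of the difficulty'' lies is accurate --- but that difficulty \emph{is} the conjecture, so what you have is a reasonable research program consistent with the paper's numerical evidence, not a proof of either half of the statement.
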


\noindent\textsc{Acknowledgment}: The author would like to thank Roghayeh Maleki, Karen Meagher and Shaun Fallat for proofreading and helping improve the presentation of this paper. The author is also grateful to the two anonymous referees for their valuable comments and suggestions.

\bibliographystyle{plain}

\end{document}